\let\oldInclude=\include
\def\include#1{\bgroup\def\clearpage{\relax}\oldInclude{#1}\egroup}
\numberwithin{equation}{section}
\newtheorem {thm}{Theorem}[section]
\newtheorem {definition}[thm]{Definition}
\newtheorem {lem}[thm]{Lemma}
\newtheorem {prop} [thm] {Proposition}
\newtheorem {cor} [thm] {Corollary}
\newtheorem* {claim*} {Claim}
\begin{document}
	
\title {Mapping Properties of Quadrature Domains in Several Complex Variables} \thanks{Research supported by the NSF Analysis and Cyber-enabled Discovery and Innovation programs, grant DMS~1001701}

\author{Alan R. Legg}
\address{Indiana University Purdue University Fort Wayne, Ft. Wayne, IN}
\email{leggar01@ipfw.edu}

\begin{abstract}
	We make use of the Bergman kernel function to study quadrature domains for $L^2$ holomorphic functions of several variables. Emphasis is given to generalizing nice mapping properties of planar quadrature domains to the several-variable setting.
	\keywords{Quadrature Domain, Bergman Space, Several Complex Variables, Biholomporphic Mappings}
\end{abstract}
\maketitle

\section{Introduction}
 This article examines some properties of quadrature domains which can be analyzed from the viewpoint of complex analysis. Special attention will be given to the relationship between one and several variables; a major goal is to generalize elegant planar phenomena to several dimensions. 

The theory of quadrature domain is a few decades old, and has already experienced effective use in several fields of mathematics, including potential theory, Riemann surfaces, and complex analysis.  A classical quadrature domain, following Aharanov and Shapiro \cite{AS}, is a domain $\Omega \subset \mathbb{C}$ such that evaluation of integrals of functions in the class $AL^1(\Omega)$ (holomorphic functions which are integrable) is a finite computation involving point evaluations :  $\Omega$ is a quadrature domain if there exist finitely many points $z_1,\cdots z_K$ of $\Omega$, and finitely many complex constants $\{c_{jk}\}|_{j=1,k=1}^{J,K}$ such that, for every $f \in AL^1(\Omega)$,
\begin{equation}
\label{qi}
\int_{\Omega} f dA = \sum_{j\leq J,k \leq K} c_{jk} f^{(j)}(z_k).
\end{equation}

If $\Omega$ is such a domain, the relation (\ref{qi}) is called its  `quadrature identity,' and the points $z_k$ are called `quadrature nodes.' The definition can be modified by changing the test class of functions for which the quadrature identity must hold; for instance, we could insist on a formula valid for certain harmonic functions, or, relevant to the current article, functions in the Bergman space.  Aharanov and Shapiro relate in the above-cited article that such domains arose in solutions to extremal problems, but proved interesting on their own.

Aharanov and Shapiro originally found a satisfying relationship between the quadrature identity of a quadrature domain and the so-called `Schwarz function' of a domain. Developing this line of thought led to a nice list of properties enjoyed by quadrature domains; in \cite{AS},  they showed among other things that a bounded simply connected quadrature domain must be a rational image of the unit disc with poles outside the closed unit disc, and that the boundary of any quadrature domain must be contained in some algebraic curve.

 Following these developments, Avci was able in his doctoral dissertation \cite{Avci} to incorporate conformal mapping and the Bergman kernel into the study of quadrature identities. To point out a motivational example, he showed that the rational biholomorphic image of a quadrature domain with poles outside the closure, is again a quadrature domain (generalizing the disc/simply connected case of Aharanov and Shapiro).

 The theory then developed along various lines. Gustafsson \cite{Gustafsson} used Riemann surfaces called Schottky doubles together with meromorphic differentials to expand and refine results of Aharanov and Shapiro. For example, he showed that the boundary of a quadrature domain is a whole algebraic curve except possibly finitely many points, and that nearby bounded domains with good boundary there exist biholomorphically equivalent quadrature domains.

    Eventually, it was realized that quadrature domains have a connection to such topics as fluid dynamics, Laplacian growth, free boundaries, Hele-Shaw cells, linear algebra, subnormal operators, and exponential transforms. Fascinating though these connections are, we will not have more to say about them, other than to suggest to the reader the expository article \cite{GS} and the whole volume \cite{Proc}, which offer an account of the theory of quadrature domains as of $2005$.

  From the standpoint of complex analysis, Bell packaged the theory of quadrature domains for square-integrable analytic functions in the language of the Bergman kernel and potential theory, synthesizing the efforts of Aharanov and Shapiro, Avci, and Gustafsson \cite{Bell3, Bell4, Bell5, Bell6, BGS}. He noticed that on quadrature domains, entities like Ahlfors maps, the Bergman kernel, the Szeg\H{o} kernel, and the harmonic measure functions are algebraic, and take very simple forms on the boundary \cite{Bell6}. From here he was able to describe quadrature identities and conformal maps into quadrature domains in terms of relations involving the Bergman kernel function.

This approach also led to glimpses of a theory of quadrature domains in several complex variables \cite{Bell5}, which is the topic of this article.  For quadrature domains whose quadrature identities are satisfied for real-valued harmonic functions, the several-dimensional approach is managed by writing quadrature identities in terms of an object called the `Schwarz Potential', and using real potential theory: see \cite{Shapiro2} for some of the introductory ideas.  But the question of what quadrature domains for holomorphic functions of several complex variables look like has been largely unexplored (to my knowledge, \cite{Bell5, HV} are the two references so far). This possible development of the theory is an open question of Sakai \cite{Sakai}.  This article employs complex analysis and the Bergman kernel to analyze mapping properties of quadrature domains in several complex variables

 We will first give a consideration of product domains and quadrature domains. Then we look at some biholomorphic mapping properties of what Bell calls `Quadrature domains with a capital Q', and `Bergman coordinates' \cite{Bell3, Bell5}. A few counterexamples against planar behaviors in many variables are offered. After this, we establish that smooth bounded convex domains are always biholomorphic to a quadrature domain. We end with a return to the plane, where homotopies through quadrature domains are exhibited. 

\section{Product Domains}
In this article, we will be concerned solely with quadrature domains in $\mathbb{C}^n$ whose quadrature identity is valid for functions in the Bergman space $H^2$ of square-integrable holomorphic functions.  That is to say, $\Omega \subset \mathbb{C}^n$ will be a quadrature domain if there exist finitely many constants $c_{i\alpha}$ and points $z_i$ such that for all $f \in H^2(\Omega)$, 

\[\int_\Omega f(z)dz= \sum_{i,\alpha} c_{i\alpha} \frac{\partial ^\alpha f}{\partial z^\alpha }(z_i),   \]
where $\alpha$ are multiindices.

We begin our study of quadrature domains for holomorphic functions of several complex variables by considering an intuitive case: that of a domain in $\mathbb{C}^n$ which is the Cartesian product of bounded planar quadrature domains.  These domains will retain some of the properties of quadrature domains in $\mathbb{C}$, and so are a natural starting point.

 Recall that given a domain $\Omega$, the Bergman kernel function $K(z,w)$ defined on $\Omega \times \Omega$ is the function guaranteed to reproduce functions $f\in H^2(\Omega)$ under inner products: $\int_\Omega  f(w)K(z,w) dw = f(z).$ It is holomorphic in the first argument and antiholomorphic in the second.  Inner products taken against antiholomorphic derivatives of $K$ in the antiholomorphic variable result in evaluation of derivatives of Bergman-space functions.

Since quadrature identities also invoke the evaluation of derivatives of Bergman-space functions, we recall the following definition:

\begin{definition} The Bergman span of a domain $\Omega \subset \mathbb{C}^n$ is the complex linear span of the functions $\frac{\partial ^\alpha}{\partial \bar{w}^\alpha}K(z,w)|_{w=w_0}$, where $K$ is the Bergman kernel of $\Omega$, $\alpha$ ranges over multiindices, and $w_0$ ranges over $\Omega$.
\end{definition}

Thus we see that, after restricting ourselves to domains of finite volume, $\Omega$ is a quadrature domain for $H^2$ functions if and only if the function $1$ is in the Bergman span of $\Omega$.

Using the fact that for a product domain $\Omega=\Omega_1 \times \Omega_2 \times \cdots \Omega_n$, the Bergman kernel satisfies \begin{equation}
\label{eq1}
K_\Omega(z,w)=K_{\Omega_1}(z_1,w_1) K_{\Omega_2}(z_2,w_2) \cdots K_{\Omega_n}(z_n,w_n),
\end{equation}
 we can draw a conclusion about the possibility of quadrature domains being product domains.

\begin{prop}
	\label{prop2}
Let $\Omega=\prod_{j=1}^{n}\Omega_j$, a set of finite volume, be a cross product of planar domains of finite area.  Then $\Omega$ is a quadrature domain for $H^2$ functions if and only if each of the domains $\Omega_j$ is a quadrature domain for $H^2$ functions.

\begin{proof}
	We first suppose that $\Omega$ is a quadrature domain for $H^2$. Since the Bergman span of $\Omega$ then contains $1$, there are multi-indices $\alpha$, points $w^{(j)}$, and constants $c_{\alpha,j}$ such that
	\[\sum_{j \le J, |\alpha| \leq N}c_{\alpha,j}\frac{\partial^\alpha}{\partial \bar{w}^\alpha} K_\Omega(z,w)|_{w=w^{(j)}}=1.\]
	And in light of (\ref{eq1}), this expression may be expanded to
	\[\sum_{j \le J, |\alpha| \le N}c_{\alpha,j} \frac{\partial^{\alpha_1}}{\partial \bar{w_1}^{\alpha_1}}K_1(z_1,w_1)|_{w_1=w^{(j)}_1} \prod_{k=2}^{n}\frac{\partial^{\alpha_k}}{\partial \bar{w_k}^{\alpha_k}}K_k(z_k,w_k)|_{w_k=w^{(j)}_k}=1.   \]
	From here, let $(z_2,z_3,\cdots,z_n)$ be fixed in $\prod_{j=2}^{n}\Omega_j$.  After collecting like terms, for some new constants $C_{\alpha_1,j}$,
	\[\sum_{j \le J, \alpha_1 \le N} C_{\alpha_1,j} \frac{\partial^{\alpha_1}}{\partial \bar{w_1}^{\alpha_1}}K_1(z_1,w_1)|_{w_1=w^{(j)}_1} =1.  \]
	That means $1$ is in the Bergman span of $\Omega_1$, so $\Omega_1$ is a quadrature domain.  The other $\Omega_j$ are quadrature domains in the same way.
	
	For the converse implication, assume that each $\Omega_j$ is a quadrature domain.  Then for each $j$, the function $1$ is in the Bergman span of $\Omega_j$, and we can choose for each $j$ an appropriate linear combination of derivatives of the Bergman kernel in the antiholomorphic variable which add to $1$. Let $\omega_j$ denote the finite set of all of the quadrature nodes of $\Omega_j$, and let $\omega$ be the product of the $\omega_j$: $\omega=\omega_1 \times \omega_2 \times \cdots \times \omega_n$. Denumerate $\omega$ as a list of points $\omega=\{\omega^{(l)}\}_{l \le L}$. The notation $\omega^{(l)}_j \in \Omega_j$ will signify the $j^{th}$ coordinate of the point $\omega^{(l)}$.
	
	Having for each $j$ an element of the Bergman span of $\Omega_j$ equal to $1$, multiply them all together, and observe that for some positive integer $N$ and constants $c^{(l)}_{\alpha, j}$,
	\begin{equation}
	\label{eq2}
	\prod_{j=1}^{n} \sum_{|\alpha| \le N,l \le L} c^{(l)}_{\alpha,j} \frac{\partial ^{\alpha_j}}{\partial \bar{w}_j^{\alpha_j}}K_j(z_j,w_j)|_{w_j=\omega_j^{(l)}}=1^n=1.
	\end{equation}
	Keeping in mind (\ref{eq1}), we write for any multiindex $\alpha$:
	\[\prod_{j=1}^{n} \frac{\partial ^{\alpha_j}}{\partial \bar{w_j}^{\alpha_j}}K_j(z_j,w_j) =\frac{\partial ^\alpha}{\partial \bar{w}^\alpha} K_\Omega (z,w).\]
	Upon distributing the multiplication in (\ref{eq2}), regrouping, and simplifying, we have for some new constants $C_{\alpha, l}$:
	\[\sum_{|\alpha| \le nN, l \le L} C_{\alpha, l} \frac{\partial ^\alpha}{\partial \bar{w}^\alpha}K_\Omega (z,w)|_{w=\omega^{(l)}}=1.   \]
	
	So $1$ is in the Bergman span of $\Omega$, and $\Omega$ is a quadrature domain for $H^2$.
\end{proof}
\end{prop}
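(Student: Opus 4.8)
The plan is to use the characterization recorded just above the statement: for a domain of finite volume, being a quadrature domain for $H^2$ is equivalent to the constant function $1$ lying in its Bergman span. Both implications then become manipulations of finite linear combinations of antiholomorphic derivatives of the Bergman kernel, using the product formula (\ref{eq1}) to pass back and forth between $K_\Omega$ and the factors $K_{\Omega_j}$.

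For the forward implication, I would start from a representation $\sum c_{\alpha,j}\,\partial^\alpha_{\bar w} K_\Omega(z,w)|_{w=w^{(j)}} = 1$ and substitute (\ref{eq1}), so each term becomes a product $\prod_k \partial^{\alpha_k}_{\bar w_k} K_{\Omega_k}(z_k,w_k)|_{w_k = w^{(j)}_k}$. Fixing the coordinates $z_2,\dots,z_n$ at an arbitrary point of $\Omega_2\times\cdots\times\Omega_n$ turns the factors for $k\ge 2$ into numerical constants; absorbing these into the $c_{\alpha,j}$ yields new constants $C_{\alpha_1,j}$ with $\sum C_{\alpha_1,j}\,\partial^{\alpha_1}_{\bar w_1}K_{\Omega_1}(z_1,w_1)|_{w_1=w^{(j)}_1} = 1$ as an identity in $z_1$ (the right side is independent of the fixed coordinates, so no vanishing issue arises). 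Hence $1$ lies in the Bergman span of $\Omega_1$, so $\Omega_1$ is a quadrature domain, and the same argument applied coordinatewise disposes of the remaining $\Omega_j$.

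For the converse, each hypothesis ``$\Omega_j$ is a quadrature domain'' supplies an expression for $1$ as a finite linear combination of antiholomorphic derivatives of $K_{\Omega_j}$ evaluated at the quadrature nodes of $\Omega_j$. I would multiply these $n$ identities together. The key algebraic step is that, by (\ref{eq1}) read in reverse, a product $\prod_j \partial^{\alpha_j}_{\bar w_j} K_{\Omega_j}(z_j,w_j)$ of antiholomorphic derivatives of the factors equals $\partial^\alpha_{\bar w} K_\Omega(z,w)$ for the assembled multi-index $\alpha = (\alpha_1,\dots,\alpha_n)$, while each $n$-tuple of evaluation points assembles into a point of the product of the node sets. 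Distributing the product over the sums, regrouping terms by multi-index and evaluation point, and relabelling constants then exhibits $1$ as a finite linear combination of antiholomorphic derivatives of $K_\Omega$ at points of $\Omega$; that is, $1$ lies in the Bergman span of $\Omega$, so $\Omega$ is a quadrature domain.

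I do not expect a serious obstacle here: the mathematical content is the product formula (\ref{eq1}) together with the Bergman-span criterion, and the rest is bookkeeping with multi-indices. The only point that needs a little care is checking that the ``collected'' coefficients really are constants and that the assembled evaluation points really lie in the relevant domains, so that the resulting expressions are honest elements of the respective Bergman spans; the finite-volume hypothesis is precisely what guarantees $1 \in H^2$ and hence makes the Bergman-span criterion applicable at all.
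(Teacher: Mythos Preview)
Your proposal is correct and follows essentially the same approach as the paper: both directions hinge on the Bergman-span characterization of quadrature domains together with the product formula (\ref{eq1}), with the forward direction carried out by freezing all but one coordinate and the converse by multiplying the $n$ one-variable identities and regrouping. Your remark that the right side being identically $1$ precludes any vanishing issue for the collected coefficients is a small clarification the paper leaves implicit, but otherwise the arguments coincide.
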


In the plane, finite-area quadrature domains for $H^2$ have a better Bergman span inclusion property than just for the function $1$: every holomorphic polynomial on such a domain is in the Bergman span \cite{Bell5}.  This a priori stronger requirement, as we will see later, is not upheld in the case of several complex dimensions.  In \cite{Bell5}, Bell called a quadrature domain in $\mathbb{C}^n$ which does have the property that all holomorphic polynomials reside in its Bergman span, a Quadrature Domain (with a capital `Q'). He also asked whether such quadrature domains exhibit strong mapping properties as in the planar case.  We will refer to such a domain as a `QDP'(short for `Quadrature Domain with Polynomials in the Bergman Span.' We will touch on mapping questions for QDP's in what follows, but for now, we show that products of smooth bounded planar quadrature domains are actually examples of QDP's in several dimensions.

\begin{prop}
	\label{QDcross}
	If $\Omega_j$, $j=1,\thinspace 2, \cdots, \thinspace n$, are smooth finite-area domains in the plane whose cross product $\Omega=\prod_{j=1}^{n}\Omega_j$ is of finite volume, then $\Omega$ is a QDP if and only if each $\Omega_j$ is a QDP.
	
	\begin{proof}
	Assuming that $\Omega$ is a QDP, we appeal to the previous proposition to conclude that each $\Omega_j$ is a quadrature domain of finite area in the plane.  Any such quadrature domain is automatically a QDP, as has been noted.
	
	For the reverse implication, take each $\Omega_j$ to be a QDP.  The method of proof from the previous proposition will show that any holomorphic monomial $z^\alpha$ is in the Bergman span of $\Omega$.  We need only choose for each $j$ a member of the Bergman span of $\Omega_j$ equal to $z_j^{\alpha_j}$, and then multiply all of them together. We obtain (\ref{eq2}) once more, except that the right hand side has become $z_1^{\alpha_1} z_2^{\alpha_2}\cdots z_n^{\alpha_n}=z^\alpha.$ Simplifying the left hand side shows that a member of the Bergman span of $\Omega$ is equal to $z^\alpha$.  Having verified the presence of each monomial in the Bergman span, we are done by linearity.
	\end{proof}
\end{prop}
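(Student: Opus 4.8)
The plan is to mirror exactly the two-directional argument used in Proposition \ref{prop2}, upgrading the function $1$ to an arbitrary holomorphic monomial $z^\alpha$. For the forward direction, suppose $\Omega$ is a QDP. Then in particular $1$ lies in the Bergman span of $\Omega$, so $\Omega$ is a quadrature domain, and Proposition \ref{prop2} tells us each factor $\Omega_j$ is a quadrature domain of finite area in the plane. I would then invoke the fact, quoted in the excerpt from \cite{Bell5}, that in the plane every finite-area quadrature domain for $H^2$ already has all holomorphic polynomials in its Bergman span — that is, it is automatically a QDP. This disposes of the forward implication with essentially no computation; the content is entirely in the cited planar fact plus Proposition \ref{prop2}.

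For the converse, assume each $\Omega_j$ is a QDP and fix a multiindex $\alpha = (\alpha_1,\dots,\alpha_n)$. For each $j$, choose an element of the Bergman span of $\Omega_j$ equal to the single-variable monomial $z_j^{\alpha_j}$: that is, finitely many points $\omega_j^{(l)} \in \Omega_j$, multiindices, and constants $c^{(l)}_{\alpha,j}$ with $\sum c^{(l)}_{\alpha,j}\,\partial_{\bar w_j}^{(\cdot)}K_j(z_j,w_j)|_{w_j=\omega_j^{(l)}} = z_j^{\alpha_j}$. Form the product set $\omega = \omega_1 \times \cdots \times \omega_n$, enumerate it as $\{\omega^{(l)}\}_{l \le L}$, and multiply the $n$ identities together. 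The left side is a product of $n$ sums of derivatives of the factor kernels, and by the product formula (\ref{eq1}) for the Bergman kernel — specifically $\prod_j \partial_{\bar w_j}^{\alpha_j}K_j(z_j,w_j) = \partial_{\bar w}^\alpha K_\Omega(z,w)$ — each term in the expanded product is a constant times an antiholomorphic derivative of $K_\Omega$ evaluated at a point of $\omega$. The right side is $z_1^{\alpha_1}\cdots z_n^{\alpha_n} = z^\alpha$. After distributing, regrouping like terms, and absorbing constants into new constants $C_{\beta,l}$, we obtain $z^\alpha$ as an explicit member of the Bergman span of $\Omega$. Since $\alpha$ was arbitrary and the Bergman span is a linear space, every holomorphic polynomial on $\Omega$ lies in the Bergman span, so $\Omega$ is a QDP.

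The only genuine bookkeeping is the same as in Proposition \ref{prop2}: tracking multiindices through the distributed product so that one checks the total order stays bounded (by $nN$ if each single-variable representative uses derivatives up to order $N$) and that each resulting evaluation point indeed lies in the product node set $\omega \subset \Omega$. That is routine rather than an obstacle; no convergence or functional-analytic subtlety enters, because everything is a finite linear combination. If there is any point requiring care it is a cosmetic one — making sure the indexing in the product expansion is written so that the cross-terms (which pick up a derivative of the "wrong" order in some factor) are correctly seen to still be antiholomorphic derivatives of $K_\Omega$, which is immediate from (\ref{eq1}). Accordingly I expect the proof to be short, essentially a one-line reduction to Proposition \ref{prop2} in one direction and a verbatim re-run of its multiplication trick in the other.
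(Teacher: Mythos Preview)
Your proposal is correct and follows essentially the same approach as the paper's proof: the forward direction reduces to Proposition~\ref{prop2} plus the cited planar fact that finite-area quadrature domains for $H^2$ are automatically QDP's, and the converse reruns the multiplication trick of Proposition~\ref{prop2} with $z_j^{\alpha_j}$ in place of $1$. The bookkeeping you flag (total derivative order bounded by $nN$, evaluation points in the product node set) is exactly what the paper handles.
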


One of the useful mapping properties of planar quadrature domains is that the image of a biholomorphic map is a quadrature domain if and only if the derivative of the map appears in the Bergman span \cite{Avci, Bell4}; replacing the derivative with the complex Jacobian determinant generalizes the fact to $\mathbb{C}^n$ \cite{Bell5}. The planar version quickly leads to a theorem for identifying simply connected bounded planar quadrature domains. We reproduce these well-known facts here. Statements and proofs are in \cite{AS, Avci, Bell4}.

\begin{thm}
 \label{Jacobthm}
 Let $f$ be a biholomorphic mapping from $\Omega \subset \mathbb{C}^n$ to $V \subset \mathbb{C}^n$, with $\Omega$ and $V$ of finite volume, $n \geq 1$.  Then $V$ is a quadrature domain if and only if the complex Jacobian determinant of $f$ is contained in the Bergman span of $\Omega$.
 \end{thm}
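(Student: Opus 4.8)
The plan is to deduce the statement from the transformation law of the Bergman kernel, combined with the characterization recorded above that a finite-volume domain is a quadrature domain for $H^2$ exactly when the constant function $1$ lies in its Bergman span. Write $J_f$ for the complex Jacobian determinant of $f$, and define $U\colon H^2(V)\to H^2(\Omega)$ by $U\psi=(\psi\circ f)\,J_f$. By the change-of-variables formula $U$ is an isometry, and it is surjective because the analogously defined $U^{-1}\phi=(\phi\circ f^{-1})\,J_{f^{-1}}$ is a two-sided inverse; since $J_{f^{-1}}\circ f=1/J_f$ one computes $U(1)=J_f$ and $U^{-1}(J_f)=1$. Hence it suffices to prove the single containment
\[
U\bigl(\text{Bergman span of }V\bigr)\subseteq\text{Bergman span of }\Omega .
\]
Indeed, applying the same assertion to the biholomorphism $f^{-1}\colon V\to\Omega$ yields $U^{-1}\bigl(\text{Bergman span of }\Omega\bigr)\subseteq\text{Bergman span of }V$, and then: $V$ is a quadrature domain iff $1$ lies in the Bergman span of $V$ iff $J_f=U(1)$ lies in the Bergman span of $\Omega$ (the reverse direction of the last equivalence using $1=U^{-1}(J_f)$ together with the second containment).

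To prove the containment I would begin from the kernel transformation formula $K_\Omega(z,w)=J_f(z)\,\overline{J_f(w)}\,K_V\bigl(f(z),f(w)\bigr)$, rewritten as
\[
J_f(z)\,K_V\bigl(f(z),f(w)\bigr)=\frac{1}{\overline{J_f(w)}}\,K_\Omega(z,w).
\]
A generator of the Bergman span of $V$ is a function of $z$ of the form $\dfrac{\partial^{\alpha}}{\partial\bar u^{\alpha}}K_V(\,\cdot\,,u)\big|_{u=u_0}$ with $u_0\in V$ and $\alpha$ a multi-index; writing $u_0=f(w_0)$ and applying $U$ produces $J_f(z)\,\dfrac{\partial^{\alpha}}{\partial\bar u^{\alpha}}K_V\bigl(f(z),u\bigr)\big|_{u=u_0}$. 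Since $\bar u=\overline{f(w)}$ is an antiholomorphic change of coordinates near $w_0$, the operator $\partial^{\alpha}/\partial\bar u^{\alpha}$ evaluated at $u_0$ is a finite linear combination---with constant coefficients built from the derivatives of $f$ at $w_0$---of the operators $\partial^{\beta}/\partial\bar w^{\beta}$ evaluated at $w_0$ for $|\beta|\le|\alpha|$. Consequently the function just displayed is a linear combination of the functions $\dfrac{\partial^{\beta}}{\partial\bar w^{\beta}}\!\Bigl[\tfrac{1}{\overline{J_f(w)}}\,K_\Omega(z,w)\Bigr]_{w=w_0}$, and by the Leibniz rule each of these is a linear combination of the generators $\dfrac{\partial^{\gamma}}{\partial\bar w^{\gamma}}K_\Omega(\,\cdot\,,w)\big|_{w=w_0}$, $|\gamma|\le|\beta|$, of the Bergman span of $\Omega$. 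Thus $U$ carries generators to members of the Bergman span of $\Omega$, which gives the containment.

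The only genuinely technical point is the bookkeeping in the previous paragraph: verifying by the multivariate chain rule and the Leibniz rule that antiholomorphic differentiation at $u_0$ re-expresses in terms of antiholomorphic differentiation at $w_0$, with no denominators blowing up. This is exactly where the biholomorphy of $f$, rather than mere holomorphy, enters: the holomorphic Jacobian matrix $Df$ is invertible and $J_f$ is nowhere zero on $\Omega$, so the finite linear systems relating the two families of differential operators are triangular in the order of differentiation with invertible leading blocks, and in particular the factor $1/\overline{J_f(w_0)}$ causes no trouble. Everything else is formal: the reproducing property packaged into the definition of the Bergman span, the isometry $U$ furnished by change of variables, and the prior equivalence between being a quadrature domain and containing $1$ in the Bergman span.
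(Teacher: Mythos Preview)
Your argument is correct. Note, however, that the paper does not supply its own proof of this theorem: it is stated as a well-known fact with references to \cite{AS, Avci, Bell4}. So there is no in-paper proof against which to compare directly.

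That said, the paper does later prove the stronger statement (Lemma~\ref{lem1}) that the unitary $\Lambda_1$ carries the Bergman span of $V$ bijectively onto the Bergman span of $\Omega$, from which the present theorem follows by taking $g=1$. The paper's proof of that lemma differs from yours in method: rather than manipulating the kernel transformation formula and tracking how $\partial^\alpha/\partial\bar u^\alpha$ re-expresses in the $\bar w$ coordinates, it works on the dual side via inner products. Assuming $\Lambda_1(g)$ lies in the Bergman span of $\Omega$, one has for every $h\in H^2(V)$ that $\langle h,g\rangle_V=\langle\Lambda_1(h),\Lambda_1(g)\rangle_\Omega$ equals a finite combination of point-derivatives of $\Lambda_1(h)=u\cdot(h\circ f)$; then the chain and Leibniz rules convert this into a finite combination of point-derivatives of $h$ at the image points, forcing $g$ itself to be a Bergman-span element. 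Your route and the paper's route are essentially dual to one another: you differentiate the kernel in the second slot and push through the change of variables, while the paper pairs against an arbitrary test function and pushes the derivatives onto it. The inner-product version avoids the explicit inversion of $\partial/\partial\bar u$ in terms of $\partial/\partial\bar w$ (your ``only genuinely technical point''), at the cost of a slightly less explicit identification of the resulting span element; both rely on exactly the same structural facts (biholomorphy, nonvanishing of $J_f$, and the chain/Leibniz rules).
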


\begin{thm}
	\label{thm1}
 A simply connected bounded domain $\Omega \subset \mathbb{C}$ is a quadrature domain if and only if $\Omega$ is the image of the unit disc under a rational biholomorphism with all poles outside the closure of the unit disc.
\end{thm}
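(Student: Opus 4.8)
The plan is to reduce everything to Theorem~\ref{Jacobthm} in the case $n=1$, together with the completely explicit Bergman kernel of the unit disc $\mathbb{D}$, namely $K_{\mathbb{D}}(z,w)=\pi^{-1}(1-z\bar w)^{-2}$. The key preliminary step I would carry out is to identify the Bergman span of $\mathbb{D}$ explicitly: it is exactly the space
\[
\mathcal{S}=\bigl\{\,R'\ :\ R\ \text{rational, all poles in}\ \mathbb{C}\setminus\overline{\mathbb{D}}\,\bigr\}=\bigl\{\,\text{rational functions holomorphic near}\ \overline{\mathbb{D}}\ \text{with vanishing residues}\,\bigr\}.
\]
Granting this identification, Theorem~\ref{thm1} follows quickly in both directions.

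To see the identification, I would first check that each generator $\frac{\partial^{k}}{\partial \bar w^{k}}K_{\mathbb{D}}(z,w)\big|_{w=w_0}$ lies in $\mathcal{S}$: as a function of $z$ it is rational with its only possible pole at $1/\bar w_0\notin\overline{\mathbb{D}}$, and it is the $z$-derivative of a rational function since $K_{\mathbb{D}}(z,w)=\frac1\pi\frac{\partial}{\partial z}\bigl(\frac{1}{\bar w(1-z\bar w)}\bigr)$ and $\partial/\partial z$ commutes with $\partial/\partial\bar w$ (at $w_0=0$ the generator is simply a multiple of $z^{k}$, again a $z$-derivative of a polynomial); in particular it is residue-free. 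Conversely, by partial fractions every member of $\mathcal{S}$ is a polynomial plus a linear combination of terms $(z-b)^{-m}$ with $|b|>1$, $m\ge 2$. The polynomials arise as generators with $w_0=0$, and a short triangular argument in the pole order shows that, for fixed $w_0$ with $b=1/\bar w_0$, the generators $\frac{\partial^{k}}{\partial \bar w^{k}}K_{\mathbb{D}}(z,w)\big|_{w=w_0}$, $k\ge 0$, span all of the functions $(z-b)^{-m}$, $m\ge 2$. Hence the Bergman span of $\mathbb{D}$ equals $\mathcal{S}$.

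For the ``if'' direction, suppose $\Omega=f(\mathbb{D})$ with $f$ a rational biholomorphism whose poles lie outside $\overline{\mathbb{D}}$. Then $f$ is holomorphic on a neighborhood of $\overline{\mathbb{D}}$, so $\Omega$ has finite area, and $f'$ is the derivative of a rational function with poles off $\overline{\mathbb{D}}$, hence $f'\in\mathcal{S}$. Therefore $f'$ lies in the Bergman span of $\mathbb{D}$, and Theorem~\ref{Jacobthm} shows $\Omega$ is a quadrature domain. For the ``only if'' direction, let $f\colon\mathbb{D}\to\Omega$ be a Riemann map, which exists since $\Omega$ is bounded, simply connected, and not all of $\mathbb{C}$. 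Since $\Omega$ is a quadrature domain of finite area, Theorem~\ref{Jacobthm} says that $f'$ lies in the Bergman span of $\mathbb{D}$, i.e.\ $f'\in\mathcal{S}$; thus $f'=R'$ for some rational $R$ with all poles in $\mathbb{C}\setminus\overline{\mathbb{D}}$. Then $f-R$ is constant, so $f$ is itself a rational biholomorphism of $\mathbb{D}$ onto $\Omega$ with all poles outside $\overline{\mathbb{D}}$, which is the desired conclusion.

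The one genuinely substantive point---and where I expect the only real difficulty---is the residue-freeness built into $\mathcal{S}$. A priori, an antiderivative of a function in the Bergman span could acquire logarithmic terms and fail to be rational, while still being a perfectly good single-valued holomorphic function on the simply connected disc; what excludes this is exactly that the generators $\frac{\partial^{\alpha}}{\partial\bar w^{\alpha}}K_{\mathbb{D}}(\cdot,w_0)$ have no residues, equivalently are $z$-derivatives of rational functions. Everything else is bookkeeping with partial fractions and the closed form of $K_{\mathbb{D}}$. This is, of course, a classical result of Aharonov--Shapiro; the Bergman-kernel route outlined here, following Avci and Bell, is the one that meshes with the machinery used elsewhere in the paper.
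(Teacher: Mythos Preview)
The paper does not actually prove Theorem~\ref{thm1}; it merely records it as a well-known fact and cites \cite{AS, Avci, Bell4} for statements and proofs. Your argument is correct and is precisely the Bergman-kernel route attributed to Avci and Bell: reduce to Theorem~\ref{Jacobthm} with $n=1$ and identify the Bergman span of $\mathbb{D}$ explicitly using the closed form $K_{\mathbb{D}}(z,w)=\pi^{-1}(1-z\bar w)^{-2}$. The identification of the span with $\mathcal{S}$ is right (your triangular induction on pole order and the residue-freeness check both go through), and the two directions then follow exactly as you describe. So your proposal supplies what the paper deliberately omits, in the spirit of the sources it cites.
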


Due to the lack of a Riemann mapping theorem in more than one variable, there is no hope of generalizing Theorem \ref{thm1} to many variables by using, for example, the polydisc or ball in place of the unit disc.  However, if we restrict our focus to products of simply connected domains, we can generalize directly. We know already that product quadrature domains are products of quadrature domains, so the following proposition has to do with the automorphism group of the polydisc.

\begin{prop}
	\label{sccp}
The product $\Omega \subset \mathbb{C}^n$ of $n$ simply connected bounded planar domains is a quadrature domain if and only if $\Omega$ is the image of a rational biholomorphism from the unit polydisc, with all singularities off the closure of the unit polydisc.
\begin{proof}
	For the forward implication, note that if $\Omega=\prod_{j=1}^{n} \Omega_j$ is a quadrature domain, then by Proposition \ref{prop2}, each $\Omega_j$ is a bounded quadrature domain. Then by Theorem \ref{thm1}, each $\Omega_j$ is the image of a rational biholomorphism $f_j$ from the unit disc, with singularities off the closure of the unit disc.  A rational biholomorphism from the polydisc to $\Omega$ is thus $f(z_1,z_2,\cdots,z_n)=(f_1(z_1), f_2(z_2),\cdots, f_n(z_n))$.  Since some coordinate of any boundary point of the unit polydisc lies on the unit circle, we see that $f$ has no singular points on the boundary of the unit polydisc.
	
	For the converse, assume that the product domain $\Omega$ is the image of a rational biholomorphism $R$ of the unit polydisc with no singularities on the closure of the unit polydisc. By the Riemann mapping theorem, there is a biholomorphism from the unit disc to $\Omega_j$, say $f_j$. The map $f(z)=(f_1(z_1),f_2(z_2),\cdots,f_n(z_n))$ is thus a biholomorphism from the unit polydisc to $\Omega$.
	
	Letting $R^{-1}$ be the inverse of $R$, we use a composition and consider the map $R^{-1} \circ f$ from the unit polydisc to itself.  We know from the theory of several complex variables (for instance \cite{Kranz}) that the only automorphisms of the unit polydisc are of the form $\mu \circ \sigma$, where $\mu(z)=(\mu_1(z_1),\mu_2(z_2),\cdots,\mu_n(z_n))$, each $\mu_j$ being a M\"{o}bius transform of the unit disc, and $\sigma$ being a linear transformation on $\mathbb{C}^n$ which permutes coordinates; that is, $\sigma(z_1,z_2,\cdots,z_n)=(z_{s(1)},z_{s(2)},\cdots, z_{s(n)})$, for a permutation $s$ of $n$ letters. Hence, for an appropriate $\mu$ and $\sigma$, $R^{-1} \circ f = \mu \circ \sigma$. 
	
	 Precomposition by $R$ results in the equality $f=R \circ \mu \circ \sigma$. But each map on the right is rational, and so $f$ is rational, which means each $f_j$ is rational.  Since each of $R$, $\mu$, $\sigma$ is nonsingular on the closure of the polydisc, we conclude that each $f_j$ is also. Hence each $\Omega_j$ is a rational image of the unit disc under a map with poles off the closed unit disc, and by Theorem \ref{thm1}, each $\Omega_j$ is a bounded quadrature domain. Finally, Proposition \ref{prop2} ensures that $\Omega$ is a quadrature domain.
\end{proof}
\end{prop}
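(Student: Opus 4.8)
The plan is to combine two ingredients already available: Proposition~\ref{prop2}, which says $\Omega$ is a quadrature domain exactly when each of its planar factors is, and Theorem~\ref{thm1}, which identifies a simply connected bounded planar quadrature domain with a rational image of the unit disc whose poles avoid $\overline{\mathbb{D}}$. The only genuinely new work is bridging ``products of disc maps'' with ``maps of the polydisc,'' which I would do through the classical description of the automorphism group of $\mathbb{D}^n$.

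For the forward implication I would argue as follows. Write $\Omega=\prod_{j=1}^n\Omega_j$. If $\Omega$ is a quadrature domain, Proposition~\ref{prop2} gives that each $\Omega_j$ is a bounded quadrature domain in the plane; each is also simply connected by hypothesis, so Theorem~\ref{thm1} yields a rational biholomorphism $f_j\colon\mathbb{D}\to\Omega_j$ with all poles outside $\overline{\mathbb{D}}$. Then $f(z)=(f_1(z_1),\dots,f_n(z_n))$ is a rational biholomorphism of the polydisc onto $\Omega$, and it remains only to check that it is nonsingular on $\overline{\mathbb{D}^n}$. Here the product structure of the polydisc is used: the pole set of the $j$th component of $f$ is a union of hyperplanes $\{z_j=p\}$ with $|p|>1$, and none of these meets $\overline{\mathbb{D}^n}$ since every point of $\overline{\mathbb{D}^n}$ has $j$th coordinate in $\overline{\mathbb{D}}$.

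For the converse, suppose $\Omega$ is the image of a rational biholomorphism $R\colon\mathbb{D}^n\to\Omega$ that is nonsingular on $\overline{\mathbb{D}^n}$. Since $\Omega=\prod_j\Omega_j$ with each $\Omega_j$ simply connected and bounded, the Riemann mapping theorem supplies biholomorphisms $g_j\colon\mathbb{D}\to\Omega_j$, and $g=(g_1,\dots,g_n)$ is a biholomorphism of $\mathbb{D}^n$ onto $\Omega$. Then $R^{-1}\circ g$ is an automorphism of the polydisc, so by the classical classification of $\operatorname{Aut}(\mathbb{D}^n)$ it equals $\mu\circ\sigma$, where $\mu$ acts coordinatewise by disc M\"obius transformations and $\sigma$ permutes the coordinates. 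Hence $g=R\circ\mu\circ\sigma$ is a composition of rational maps, each nonsingular on $\overline{\mathbb{D}^n}$; so $g$ is rational and nonsingular there, and therefore each $g_j$, being a rational function of the single variable $z_j$, is a rational biholomorphism of $\mathbb{D}$ onto $\Omega_j$ with poles off $\overline{\mathbb{D}}$. Theorem~\ref{thm1} then makes each $\Omega_j$ a bounded quadrature domain, and Proposition~\ref{prop2} concludes that $\Omega$ is one.

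The step requiring the most care is the converse, and precisely the point where the permutation $\sigma$ enters: one must be sure that passing from the polydisc automorphism $\mu\circ\sigma$ back down to the coordinate functions still forces each $g_j$ to be rational. A permutation merely reshuffles which disc variable feeds which planar factor, so it does no harm, but this is exactly where the hypothesis that $\Omega$ is a product of simply connected domains is exploited --- for a general quadrature domain there would be no reason a rational polydisc biholomorphism onto it should decompose in this fashion.
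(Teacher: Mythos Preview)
Your proof is correct and follows essentially the same route as the paper's: both directions use Proposition~\ref{prop2} and Theorem~\ref{thm1}, and for the converse you invoke the automorphism group of $\mathbb{D}^n$ to write the Riemann map as $R\circ\mu\circ\sigma$, exactly as the paper does. Your verification in the forward direction that $f$ is nonsingular on $\overline{\mathbb{D}^n}$ (via the pole hyperplanes $\{z_j=p\}$ with $|p|>1$) is in fact slightly cleaner than the paper's phrasing.
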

Not every rational image of the polydisc is a quadrature domain, as we will see later. The proposition covers the case in which the image is already assumed to be a product.  
 
 This introduction to quadrature domains by means of products has offered us a few valuable insights-first of all, quadrature domains and QDP's exist plentifully in several complex variables, and there are mapping properties to be found which might be similar to those found in the plane. The next section will explore some more mapping properties.

\section{Mapping Properties}

In this section we prove mapping properties of quadrature domains in $\mathbb{C}^n$, and in the spirit of \cite{Bell5}, will be especially concerned with QDP's.  Just as the restriction to product domains provided the basis necessary to prove Proposition \ref{sccp}, in this section the polynomial structure contained within the Bergman span of the domains will play an important role.

The crux of the matter is the following result, whose corollary will be an analogue of Theorem \ref{thm1}.  Recall that if $\Omega$ and $V$ are domains of finite volume and biholomorphic under $f: \Omega \rightarrow V$, there is a unitary isomorphism of the Bergman spaces $H^2(\Omega)$ and $H^2(V)$;  $\Lambda_1 : H^2(V) \rightarrow H^2(\Omega)$ given by $\Lambda_1(g)=u \cdot g \circ f$, $g\in H^2(V)$. Here $u$ is the complex Jacobian determinant of the map $f$.  Letting $F$ be the inverse mapping to $f$, and $U$ the complex Jacobian determinant of $F$, we have the inverse transformation $\Lambda_2$, defined by $\Lambda_2(h)=U \cdot h \circ F$ for $h\in H^2(\Omega)$ (for instance, see \cite{Bergman, BellBook}).

While $\Lambda_1$, $\Lambda_2$ provide a relationship between the Bergman spaces of $\Omega$ and $V$, it turns they even give a one-to-one correspondence between the Bergman spans of $\Omega$ and $V$. This lemma is hinted at in \cite{Bell5}, but not spelled out.

\begin{lem}
	\label{lem1}
Suppose that $\Omega$ is a domain of finite volume in $\mathbb{C}^n$, that $f$ is a biholomorphism defined on $\Omega$, and that $V=f(\Omega)$, also of finite volume.  Then, given a function $g \in H^2(V)$, $\Lambda_1(g)$ is in the Bergman span of $\Omega$ if and only if $g$ is in the Bergman span of $V$.

\begin{proof}
	For the forward direction, assume that $\Lambda_1(g)=u\cdot g \circ f$ is in the Bergman span of $\Omega$.  This means that for functions in $H^2(\Omega)$, taking an inner product against $u \cdot g \circ f$ results in a finite linear combination of point evaluations of derivatives. Thus for some points $\omega_j$ of $\Omega$, and positive integer $N$, and constants $c_{\alpha, j}$, we have for any $\varphi \in H^2(\Omega)$,
	\begin{equation}
	\label{bkr}
	\langle \varphi,\thinspace u \cdot g \circ f \rangle=\sum_{j \le J, |\alpha| \le N} c_{\alpha,j} \frac{\partial ^\alpha \varphi}{\partial z^\alpha}|_{z=\omega_j}.
	\end{equation}
	Letting $h$ denote an arbitrary function in $H^2(V)$, we will utilize this formula for the convenient choice of $\varphi=\Lambda_1(h)$.  So we compute, starting on the $V$ side, and employing (\ref{bkr}):
	\[\langle h,\thinspace g \rangle = \langle \Lambda_1(h), \thinspace \Lambda_1(g) \rangle= \sum_{j \le J, |\alpha| \le N} c_{\alpha, j} \frac{\partial ^\alpha (u \cdot h \circ f)}{\partial z^\alpha}|_{z=\omega_j}.  \]
	It is now a routine matter to calculate the terms of the sum using the Leibniz and chain rules repeatedly. Since $f$ is a fixed function, the evaluations involving $f$ and $u$ at the various $\omega_j$ do not depend on $h$ in any way, and so we can see that:
	
	\begin{equation}
	\label{thegreatblah}
\langle h, \thinspace g \rangle = \sum_{j\le J, |\alpha| \le N} C_{\alpha, j} \frac{\partial ^\alpha h}{\partial \zeta^\alpha}|_{\zeta=w_j},
	\end{equation}
		for some new constants $C_{\alpha, j}$, where we have denoted by $\zeta$ the coordinate on $V$, and have set $w_j=f(\omega_j)$.
		
	The sum on the right is none other than the following inner product:
	
\[\langle h, \thinspace \sum_{j \le J, |\alpha| \le N} C_{\alpha, j} \frac{\partial^\alpha K(\zeta, w)}{\partial \bar{w}^\alpha}|_{w=w_j} \rangle. \]
Thus, for every $h\in H^2(V)$, we have the equality
 \[ \langle h, \thinspace g \rangle = \langle h, \thinspace \sum_{j \le J, |\alpha| \le N} C_{\alpha, j} \frac{\partial ^\alpha K(\zeta, w)}{\partial \bar{w}^\alpha}|_{w=w_j} \rangle, \]
 and since $g$ and the Bergman span element on the right are both themselves in $H^2(V)$, we conclude that $g$ is identical to that element of the Bergman span.

 On the other hand, if we first assume that $g$ is in the Bergman span of $V$, then we note that for any $h\in H^2(\Omega)$,
 \[\langle h, \thinspace \Lambda_1(g) \rangle = \langle \Lambda_2(h), \thinspace g \rangle, \]
 since $\Lambda_1,\Lambda_2$ are unitary. The proof now proceeds just as above, using the fact that an inner product against $g$ is a finite linear combination of evaluations of derivatives, together with the Leibniz and chain rules on $\Lambda_2(h)$, to arrive at the conclusion that the original inner product $\langle h, \thinspace \Lambda_1(g) \rangle $ results in a finite linear combination of evaluations of derivatives of $h$. And since $h$ was arbitrary, this will imply that $\Lambda_1(g)$ must be identical to the Bergman span element of $\Omega$ which reproduces the same linear combination of evaluations of derivatives.
\end{proof}
\end{lem}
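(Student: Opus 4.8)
The plan is to recast membership in the Bergman span as a statement about bounded linear functionals and then transport that statement across the unitary isomorphism. Concretely, a function $g\in H^2(V)$ lies in the Bergman span of $V$ if and only if the functional $h\mapsto\langle h,g\rangle$ on $H^2(V)$ is a finite linear combination of the evaluation functionals $h\mapsto\partial^\alpha h/\partial\zeta^\alpha|_{w_j}$: the element of $H^2(V)$ representing $h\mapsto\partial^\alpha h/\partial\zeta^\alpha|_{w_0}$ is precisely $\partial^\alpha K_V(\zeta,w)/\partial\bar w^\alpha|_{w=w_0}$, and two functions of $H^2(V)$ inducing the same functional must agree. So the lemma reduces to showing that this functional-theoretic property is preserved under $\Lambda_1$ and $\Lambda_2$.

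For the forward direction, assume $\Lambda_1(g)=u\cdot g\circ f$ is in the Bergman span of $\Omega$, so that $\langle\varphi,\Lambda_1(g)\rangle=\sum_{j,\alpha}c_{\alpha,j}\,\partial^\alpha\varphi/\partial z^\alpha|_{\omega_j}$ for every $\varphi\in H^2(\Omega)$. Feed in $\varphi=\Lambda_1(h)$ for an arbitrary $h\in H^2(V)$: unitarity of $\Lambda_1$ collapses the left side to $\langle h,g\rangle$, while the right side becomes $\sum_{j,\alpha}c_{\alpha,j}\,\partial^\alpha(u\cdot h\circ f)/\partial z^\alpha|_{\omega_j}$. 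Expanding by the Leibniz rule and the several-variable chain rule, each $\partial^\alpha(u\cdot h\circ f)/\partial z^\alpha|_{\omega_j}$ is a finite linear combination of the derivatives $\partial^\beta h/\partial\zeta^\beta|_{f(\omega_j)}$, with coefficients assembled from $u$, $f$, and their derivatives at the fixed points $\omega_j$ — hence constants not depending on $h$. Writing $w_j=f(\omega_j)$ and collecting terms gives $\langle h,g\rangle=\sum_{j,\beta}C_{\beta,j}\,\partial^\beta h/\partial\zeta^\beta|_{w_j}$ for all $h\in H^2(V)$, so by the reformulation above $g$ coincides with the corresponding element of the Bergman span of $V$. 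The reverse direction is the mirror image: starting from $\langle k,g\rangle=\sum_{j,\alpha}c_{\alpha,j}\,\partial^\alpha k/\partial\zeta^\alpha|_{w_j}$, one uses $\langle h,\Lambda_1(g)\rangle=\langle\Lambda_2(h),g\rangle$ and applies the formula to $k=\Lambda_2(h)=U\cdot h\circ F$; the same Leibniz/chain-rule expansion shows $\langle h,\Lambda_1(g)\rangle$ is a finite linear combination of evaluations of derivatives of $h$ at the points $F(w_j)\in\Omega$, whence $\Lambda_1(g)$ is the matching element of the Bergman span of $\Omega$.

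The only step requiring genuine care — though it is ultimately routine bookkeeping — is this chain-rule expansion: one must verify that precomposition by a biholomorphism, followed by multiplication by a holomorphic function and then by a finite-order holomorphic differential operator evaluated at a point, always yields a finite-order holomorphic differential operator evaluated at the image point, with coefficients independent of $h$. This is the multivariate Fa\`a di Bruno computation, and it is the place where holomorphicity and biholomorphicity of $f$ enter in an essential way; notably, no cancellation and no nonvanishing of the Jacobian $u$ is needed for the argument to go through. Everything else is formal manipulation with the unitarity relation $\Lambda_2=\Lambda_1^{-1}=\Lambda_1^{*}$.
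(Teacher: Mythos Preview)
Your proof is correct and follows essentially the same route as the paper's: both recast Bergman span membership as the property that pairing against the function yields a finite combination of derivative evaluations, then transport this via unitarity of $\Lambda_1$ by substituting $\varphi=\Lambda_1(h)$ and expanding $\partial^\alpha(u\cdot h\circ f)/\partial z^\alpha|_{\omega_j}$ with the Leibniz and chain rules. Your explicit identification of the Fa\`a di Bruno step as the one place requiring care, and your remark that nonvanishing of $u$ is not actually used in that expansion, are nice clarifications but do not constitute a different approach.
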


Given the lemma, we can now assert that if $f : \Omega \rightarrow V$ is a biholomorphism, then letting $\zeta$ denote coordinates on $V$, it follows that the monomial $\zeta^\alpha$ is in the Bergman span of $V$ if and only if $\Lambda_1(\zeta^\alpha)$ is in the Bergman span of $\Omega$. However, $\Lambda_1(\zeta^\alpha)$ is simply the function $u \cdot f^\alpha$, where $u$ is the complex Jacobian determinant of $f$.  Appealing to linearity, we have proved the following theorem, which is a fortification of Theorem \ref{Jacobthm}. A one-dimensional version of this theorem is hinted at in \cite{Bell5}. 

\begin{thm}
	\label{QDthm}
Let $\Omega$ and $V$ be domains of finite volume in $\mathbb{C}^n$, and let $f: \Omega \rightarrow V$ be a biholomorphism between them. Then, $V$ is a QDP if and only if for each multi-index $\alpha$, the function $uf^\alpha$ is in the Bergman span of $\Omega$, where $u$ is the complex Jacobian determinant of $f$.
\end{thm}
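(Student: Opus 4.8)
The plan is to deduce this directly from Lemma \ref{lem1} once the QDP condition is translated into a statement about monomials. First I would record that, by the definition of a QDP together with the fact that the Bergman span is a complex linear subspace, $V$ is a QDP precisely when every holomorphic monomial $\zeta^\alpha$ lies in the Bergman span of $V$; note the monomial $\zeta^0\equiv 1$ is included here, so this formulation automatically subsumes the requirement that $V$ be a quadrature domain at all.

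Next I would identify the image of $\zeta^\alpha$ under the unitary $\Lambda_1$. Straight from the formula $\Lambda_1(g)=u\cdot g\circ f$ we get $\Lambda_1(\zeta^\alpha)=u\cdot f^\alpha$, where $f^\alpha=f_1^{\alpha_1}\cdots f_n^{\alpha_n}$ and $u$ is the complex Jacobian determinant of $f$. Applying Lemma \ref{lem1} with $g=\zeta^\alpha$ then yields: $\zeta^\alpha$ is in the Bergman span of $V$ if and only if $uf^\alpha$ is in the Bergman span of $\Omega$. Running this equivalence over all multi-indices $\alpha$ and invoking the reduction of the first paragraph gives the theorem; taking $\alpha=0$ recovers Theorem \ref{Jacobthm} as the special case.

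The one point requiring care is the hypothesis of Lemma \ref{lem1}, which presumes $g=\zeta^\alpha\in H^2(V)$. In the ``only if'' direction this is free, since a QDP has its monomials in the Bergman span and hence in $H^2(V)$. In the ``if'' direction I would argue as follows: if $uf^\alpha$ is in the Bergman span of $\Omega$ then in particular $uf^\alpha\in H^2(\Omega)$, so $\Lambda_2(uf^\alpha)\in H^2(V)$; writing $F=f^{-1}$ and $U$ for its complex Jacobian determinant and using the chain-rule identity $U(\zeta)\,u(F(\zeta))\equiv 1$, one computes $\Lambda_2(uf^\alpha)=U\cdot(u\circ F)\cdot(f^\alpha\circ F)=\zeta^\alpha$. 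Thus $\zeta^\alpha\in H^2(V)$ and equals $\Lambda_2$ applied to a Bergman span element of $\Omega$, so Lemma \ref{lem1} applies and places $\zeta^\alpha$ in the Bergman span of $V$.

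I do not expect a genuine obstacle here: Lemma \ref{lem1} already carries all the analytic weight, and what remains is the computation $\Lambda_1(\zeta^\alpha)=uf^\alpha$ together with the linearity reduction. The only place to stay vigilant is the $H^2$-membership bookkeeping just described, and making sure the passage from ``each monomial'' to ``each polynomial'' uses nothing beyond linearity of the Bergman span.
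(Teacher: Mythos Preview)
Your proposal is correct and takes essentially the same approach as the paper: reduce to monomials, compute $\Lambda_1(\zeta^\alpha)=uf^\alpha$, and invoke Lemma~\ref{lem1}. The paper's argument is in fact shorter because it omits the $H^2$-membership check you flag in your third paragraph; since $V$ is only assumed to have finite volume (not to be bounded), that check is genuinely needed, and your use of $\Lambda_2$ to supply it is the right fix.
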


Since many different mappings may have the same Jacobian determinant, it seems that mapping into a QDP is indeed more restrictive than mapping into a generic quadrature domain. This is in line with the intuition of \cite{Bell5}.

Theorem \ref{QDthm} can be used to relate the Bergman kernel to mappings into QDP's.  These results will be phrased in the terminology of `global Bergman coordinates,' as they appear in \cite{Bell3, Bell5}.

\begin{definition}
A global Bergman coordinate (or just `Bergman coordinate') of a domain $\Omega \subset \mathbb{C}^n$ is a biholomorphic mapping defined on $\Omega$, each of whose component functions is a quotient of elements of the Bergman span of $\Omega$.
\end{definition}

Such mappings are desirable in the plane because they relate to a Riemann surface interpretation of quadrature domains from which many strong properties follow, including algebraicity and the fact that planar Bergman coordinates always map into quadrature domains \cite{Bell3}.  Localized versions of Bergman coordinates were used in several variables by Bell and Ligocka to establish boundary regularity of biholomorphisms between certain domains \cite{BL}.  An introductory picture of the situation regarding Bergman coordinates and quadrature domains will be worked toward in the remainder of this chapter, and in part of the next.

Our first result in this direction is that Bergman coordinates do have some relation to mappings of QDP's. But the relationship is not as strong as in the plane, which we will see later.

\begin{prop}
	\label{prop3}
Let $f: \Omega \rightarrow V$ be a biholomorphism between domains of finite volume in $\mathbb{C}^n$, and assume that $V$ is a QDP.  Then $f$ is a Bergman coordinate.

\begin{proof}
	This is a consequence of Theorem \ref{QDthm}. Write $f=(f_1(z), f_2(z), \cdots, f_n(z))$, where $z$ is the coordinate on $\Omega$.  Since $\zeta^\alpha$ is in the Bergman span of $V$ for each multiindex $\alpha$, where $\zeta$ is the coordinate on $V$, we see that for each $j=1,2,\cdots, n$, $u\cdot f_j$ is in the Bergman span of $\Omega,$ as is $u$. Thus for each $j$, $f_j=\frac{u\cdot f_j}{u}$ is a quotient of Bergman span elements.
\end{proof}
\end{prop}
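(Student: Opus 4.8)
The plan is to read this off directly from Theorem \ref{QDthm}, which is the substantive result already in hand. First I would write $f = (f_1, \dots, f_n)$ in the coordinate $z$ on $\Omega$ and let $u$ denote the complex Jacobian determinant of $f$. Since $V$ is assumed to be a QDP, Theorem \ref{QDthm} guarantees that $u f^\alpha$ belongs to the Bergman span of $\Omega$ for \emph{every} multi-index $\alpha$; this is the only external input needed.

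The next step is simply to specialize the multi-index to the two relevant cases. Taking $\alpha = 0$ gives $f^\alpha = 1$, so $u$ itself lies in the Bergman span of $\Omega$. Taking $\alpha = e_j$, the $j$-th standard unit multi-index, gives $f^\alpha = f_j$, so $u f_j$ lies in the Bergman span of $\Omega$ for each $j = 1, \dots, n$. Then for each $j$ we may write $f_j = (u f_j)/u$, which exhibits $f_j$ as a quotient of two elements of the Bergman span of $\Omega$. By the definition of a global Bergman coordinate, this is exactly the required condition, so $f$ is a Bergman coordinate.

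I do not expect a genuine obstacle here: the real work was already carried out in Lemma \ref{lem1} and its consequence Theorem \ref{QDthm}. The one point worth stating explicitly is that the quotient $(u f_j)/u$ is legitimate because $u$ is nowhere vanishing on $\Omega$, being the Jacobian determinant of a biholomorphism, so the expression genuinely represents the holomorphic function $f_j$ rather than a merely formal ratio. With that remark the proposition follows at once.
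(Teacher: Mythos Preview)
Your proposal is correct and follows essentially the same approach as the paper: invoke Theorem \ref{QDthm} to place $u$ and each $u f_j$ in the Bergman span of $\Omega$, then form the quotient $f_j = (u f_j)/u$. Your version is in fact slightly more explicit, spelling out the choices $\alpha = 0$ and $\alpha = e_j$ and noting that $u$ is nowhere vanishing, which the paper leaves implicit.
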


 As an example of the utility of this result, the following corollary offers more information about biholomorphic mapping between QDP's when one of them is a product domain.

\begin{cor}
	\label{cor1}
	Let $\Omega$ be a quadrature domain of finite volume which is a cross product of $n$ planar domains of finite area.  Then, if $V$ is a QDP and $f: \Omega \rightarrow V$ is a biholomorphism, then $f$ is an algebraic mapping.

\begin{proof}
First, let $\Omega_j$, $j=1,2,\cdots, n$ be the domains of which $\Omega$ is the product. By the planar theory of quadrature domains, the Bergman kernel function of each $\Omega_j$ is an algebraic function of $z_j$ \cite{Bell6}.  Since the Bergman kernel of $\Omega$ is simply the product of the Bergman kernels of the $\Omega_j$, it follows that the Bergman kernel of $\Omega$ is an algebraic function.  Thus every element of the Bergman span of $\Omega$ is algebraic, and the proof is finished.
\end{proof}
\end{cor}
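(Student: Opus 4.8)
The plan is to reduce everything to the known planar fact that the Bergman kernel of a finite-area quadrature domain in $\mathbb{C}$ is algebraic, and then to feed that into Proposition \ref{prop3}. First I would invoke Proposition \ref{prop2}: since $\Omega = \prod_{j=1}^{n}\Omega_j$ is itself a quadrature domain of finite volume, each planar factor $\Omega_j$ is a finite-area quadrature domain, and hence \cite{Bell6} its Bergman kernel $K_{\Omega_j}$ is an algebraic function of its two arguments.

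Next I would pass from the factors to $\Omega$ using the product formula (\ref{eq1}), namely $K_\Omega(z,w) = \prod_{j=1}^{n} K_{\Omega_j}(z_j,w_j)$. A product of algebraic functions depending on disjoint blocks of variables is algebraic, so $K_\Omega$ is algebraic in $(z,\bar w)$. The Bergman span of $\Omega$ is, by definition, the linear span of the functions $\frac{\partial^\alpha}{\partial\bar w^\alpha}K_\Omega(z,w)\big|_{w=w_0}$; since the class of algebraic functions is closed under partial differentiation, under specialization of some of the variables, and under finite linear combinations, every element of the Bergman span of $\Omega$ is an algebraic function of $z$.

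Finally, because $V$ is a QDP and $f:\Omega\to V$ is a biholomorphism, Proposition \ref{prop3} tells us that $f$ is a Bergman coordinate: each component $f_j$ is a quotient of two elements of the Bergman span of $\Omega$. A quotient of algebraic functions is algebraic, so each $f_j$, and therefore $f=(f_1,\dots,f_n)$, is an algebraic mapping.

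I do not anticipate a genuine obstacle here: the only non-formal input is the planar algebraicity theorem, which I am permitted to cite, and the rest is the routine verification that the class of algebraic functions is stable under products in separate variables, differentiation, specialization, and division, together with a bookkeeping appeal to Propositions \ref{prop2} and \ref{prop3}. If anything requires a moment's care it is phrasing ``algebraic'' correctly in the several-variable setting (a function satisfying a nontrivial polynomial identity $P(z,w)=0$), so that the closure properties used above hold literally.
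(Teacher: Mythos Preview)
Your argument is correct and follows essentially the same route as the paper's proof: algebraicity of the planar kernels $K_{\Omega_j}$, hence of $K_\Omega$ via the product formula, hence of every Bergman span element, and then Proposition \ref{prop3} to conclude that $f$ is a quotient of algebraic functions. You have simply made explicit two steps the paper leaves implicit, namely the appeal to Proposition \ref{prop2} to ensure each $\Omega_j$ is a quadrature domain and the final appeal to Proposition \ref{prop3}.
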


 We can readily admit some variations in the hypotheses; for example, if $\Omega$ is assumed to have a rational Bergman kernel, then any biholomorphic map from $\Omega$ to a QDP will be rational. Hence we have another analogue of Theorem \ref{thm1}:

 \begin{cor}
 Let $\mathbb{B}^n$ be the unit ball of $\mathbb{C}^n$. Then, if $f$ is a biholomorphic mapping $f: \mathbb{B}^n \rightarrow V$, with $f(\mathbb{B}^n)=V$ being a QDP, then $f$ must be a rational map.

 \begin{proof}
 We know $f$ must be a Bergman coordinate, and that the Bergman kernel of the unit ball is rational. Thus each component of $f$ is a quotient of rational functions.
 \end{proof}
 \end{cor}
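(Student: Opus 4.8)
The plan is to reduce the statement to Proposition \ref{prop3} together with the explicit closed form of the Bergman kernel of the ball. First, since $V$ is a QDP, Proposition \ref{prop3} applies directly and shows that $f$ is a Bergman coordinate of $\mathbb{B}^n$: each component $f_j$ can be written as a quotient $g_j / h_j$ of two elements of the Bergman span of $\mathbb{B}^n$. (Tracing through that proof, one may in fact take $h_j = u$, the complex Jacobian determinant of $f$, and $g_j = u f_j$, but the precise form is irrelevant here.) Hence it suffices to prove that every element of the Bergman span of $\mathbb{B}^n$ is a rational function of $z$.

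Next I would invoke the classical formula
\[
K_{\mathbb{B}^n}(z,w) = \frac{n!}{\pi^n}\,\frac{1}{(1-\langle z,w\rangle)^{n+1}}, \qquad \langle z,w\rangle = z_1\bar w_1 + \cdots + z_n\bar w_n .
\]
For a fixed multi-index $\beta$ and a fixed point $w_0 \in \mathbb{B}^n$, repeatedly differentiating in the $\bar w$ variables and then setting $w = w_0$ yields, by the chain and quotient rules, a finite sum of terms, each a constant times a monomial in $z$ divided by a power of $(1 - \langle z, w_0\rangle)$. In particular $\frac{\partial^\beta}{\partial \bar w^\beta}K_{\mathbb{B}^n}(z,w)\big|_{w=w_0}$ is rational in $z$. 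Taking complex-linear combinations over finitely many $\beta$ and finitely many base points $w_0$, we conclude that the entire Bergman span of $\mathbb{B}^n$ consists of rational functions.

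Combining the two steps, each $f_j = g_j / h_j$ is a quotient of rational functions of $z$, hence rational, so $f$ is a rational map. The only step carrying any content is the middle one — verifying that $\bar w$-derivatives of $(1 - \langle z,w\rangle)^{-(n+1)}$, evaluated at a point, remain rational — and this is a routine computation with the quotient and chain rules, so I do not expect a genuine obstacle. Indeed, one can bypass even this by observing that the present corollary is exactly the special case ``$\Omega$ has a rational Bergman kernel'' of the variant of Corollary \ref{cor1} noted just above, since $K_{\mathbb{B}^n}$ is manifestly rational; the reasoning there already establishes that every Bergman span element of such a domain is rational.
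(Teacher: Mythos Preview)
Your argument is correct and follows exactly the paper's approach: invoke Proposition \ref{prop3} to get that $f$ is a Bergman coordinate, then use the rationality of the Bergman kernel of $\mathbb{B}^n$ to conclude that every Bergman span element, and hence each component of $f$, is rational. You have simply spelled out in more detail what the paper compresses into two sentences.
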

Not every rational biholomorphic map on the ball is guaranteed to lead to a quadrature domain. In higher dimensions only half of Theorem \ref{thm1} survives.

The relationship between maps into QDP's and the Bergman kernel can also be employed in regard to circular domains, where the Bergman kernel is particularly well-behaved. Recall that a domain $\Omega \subset \mathbb{C}^n$ is called `circular' if whenever $z \in \Omega$ and $k\in \mathbb{C},|k|=1$, it follows that $kz \in \Omega$. In \cite{Bell1, BellProc, Mittag-Leffler} it is proved that not just the whole Bergman span, but even the so-called `Bergman span associated to the point $0$' of a circular domain, contains all holomorphic polynomials in a structured way.

 \begin{definition}
 If $\Omega$ is a domain in $\mathbb{C}^n$, and $w_0 \in \Omega$, then the Bergman span associated to the point $w_0$, or simply the `Bergman span at $w_0$' is the complex linear span of the functions $\frac{\partial^\alpha K(z,w)}{\partial \bar{w}^\alpha}|_{w=w_0}$ with $\alpha$ varying over multi-indices, but $w_0$ held fixed.
 \end{definition}

  On a bounded circular domain that contains the origin, given a multiindex $\alpha$, there exists a homogeneous holomorphic polynomial $P_\alpha$ of degree $|\alpha|$ such that $\frac{\partial^\alpha K(z,w)}{\partial \bar{w}^\alpha}|_{w=0}=P_\alpha$. Evaluation at $0$ of the $\alpha$ derivative of an $H^2$ function, is equivalent to taking an inner product against a homogeneous holomorphic polynomial.  Even more, orthonormality ensures that on a bounded circular domain containing the origin, a holomorphic function $g$ is a holomorphic polynomial if and only if $g$ is orthogonal to all holomorphic monomials of sufficiently high degree \cite{BellProc}.

  In preparing for the next theorem, let us agree to call a domain $\Omega$ a $1$-point QDP if there exists a point $w_0 \in \Omega$ such that the set of all holomorphic polynomials is contained in the Bergman span at $w_0$. We will say $\Omega$ is a $1$-point QDP `at $w_0$'.  In the literature it is sometimes implied (e.g. \cite{Bell5}) that the quadrature identity of a $1$-point quadrature domain should not involve derivatives, which in the plane for example would force the disc to be the only $1$-point quadrature domain.  Our definition here allows the function $1$ to be expressed as a Bergman span element involving derivatives. In the plane this allows certain lemniscates to be included in the definition \cite{Shapiro2, AS}. The proof of the following theorem follows the logic of \cite{BellProc}, and can be viewed as a generalization of Cartan's Uniqueness theorem governing maps between circular domains which fix the origin.

  \begin{thm}
  	\label{CircThm}
  Let $\Omega \subset \mathbb{C}^n$ be a bounded circular domain which contains $0$. Let $V$ be a domain of finite volume.  If $f: \Omega \rightarrow V$ is a biholomorphism such that $f(0)=w_0$, then $V$ is a $1$-point QDP at $w_0$ if and only if $f$ is a polynomial mapping.

  \begin{proof}
  	By translation, there is no loss of generality in assuming that $w_0=0$.
  	
  	If $f$ is a polynomial mapping, then it is clear that for every multiindex $\alpha$, $u\cdot f^\alpha$ is a holomorphic polynomial, where $u$ is the complex Jacobian determinant of $f$. Since every polynomial is in the Bergman span at $0$ for a bounded circular domain $\Omega$ containing the origin, $u \cdot f^\alpha$ is in the Bergman span.  By Lemma \ref{lem1}, this implies that $\zeta^\alpha$ is in the Bergman span of $V$. A perusal of Lemma \ref{lem1} will show that in fact $\zeta^\alpha$ is in the Bergman span at $0$ for $V$ ((\ref{thegreatblah}) and the words after).  So all the holomorphic monomials are in the Bergman span at $0$ for $V$, and $V$ is a $1$-point QDP at $0$.
  	
  	Conversely, assuming that $V$ is a $1$-point QDP at $f(0)=0$, we will use the fact from \cite{BellProc} pointed out before the theorem that to test whether a function is a holomorphic polynomial on $\Omega$, we need only take its inner product against monomials of high order.  For fixed multiindex $\alpha$, the inner product against $z^\beta$, where $z$ is the coordinate on $\Omega$, may be calculated as follows:
  	\begin{equation}
  	\label{inner1}
  	\langle z^\beta, \thinspace u\cdot f^\alpha \rangle = \langle U \cdot F^\beta, \thinspace \zeta^\alpha \rangle=\sum_{|\gamma| \le |\alpha|} c_\gamma \frac{\partial ^\gamma }{\partial \zeta^\gamma}(U \cdot F^\beta)|_{\zeta=0},
  	\end{equation}
  	where $F$ is the inverse mapping to $f$, $U$ is the complex Jacobian determinant of $F$, $\zeta$ is the coordinate on $V$, the $\gamma$ are multiindices, and the $c_\gamma$ are constants. The relation (\ref{inner1}) holds because $\zeta^\alpha$ is presumed to be in the Bergman span at $0 \in V$.
  	
  	Appealing to the chain and Leibniz rules on the right side of (\ref{inner1}), it is the case that for $|\beta|$ sufficiently large (larger than $|\alpha|$)  , every term on the right includes as a factor some component function of $F$. Since $F(0)=0$, the whole inner product (\ref{inner1}) is therefore $0$.  Hence $u \cdot f^\alpha$ is orthogonal to all homogeneous holomorphic polynomials of sufficiently high order on $\Omega$, which implies that $u \cdot f^\alpha$ is itself a holomorphic polynomial.  Since the ring of holomorphic polynomials is a unique factorization domain and $u \cdot f^\alpha$ is polynomial for arbitrary $\alpha$, it follows that $f$ itself is a polynomial mapping.  The details of this last algebraic step are deferred to a lemma. (In \cite{BellProc}, this lemma is mentioned without a proof, but for completeness we include one here.)
  	\end{proof}
  \end{thm}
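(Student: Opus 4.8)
The statement to be proved is the algebraic lemma deferred at the end of the proof of Theorem~\ref{CircThm}: if $f=(f_1,\dots,f_n)$ is the biholomorphism in question, with complex Jacobian determinant $u$, and if $u\cdot f^\alpha$ is a holomorphic polynomial for every multi-index $\alpha$, then each $f_j$ -- and hence $f$ itself -- is a polynomial mapping.

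The plan is to argue in the polynomial ring $\mathbb C[z_1,\dots,z_n]$ and its fraction field $\mathbb C(z_1,\dots,z_n)$, exploiting that the former is a UFD; no input beyond the hypothesis on the $u\cdot f^\alpha$ is needed. First I would isolate the cases $\alpha=0$ and $\alpha$ equal to the $j$th standard basis multi-index: the former says $u$ is a polynomial, which I will call $p$, and since $f$ is biholomorphic $u$ vanishes nowhere on $\Omega$, so in particular $p\not\equiv 0$; the latter says $p\,f_j$ is a polynomial, whence $f_j=(p\,f_j)/p$ is a rational function. Being a rational function that is holomorphic on the nonempty open set $\Omega$, $f_j$ may be written in lowest terms as $f_j=a_j/b_j$ with $a_j,b_j\in\mathbb C[z_1,\dots,z_n]$ and $\gcd(a_j,b_j)=1$.

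Next I would feed in the remaining multi-indices $\alpha=m\,e_j$ for $m=1,2,\dots$. Each yields that $p\,f_j^{\,m}=p\,a_j^{\,m}/b_j^{\,m}$ agrees on $\Omega$ with a polynomial; since two rational functions agreeing on a nonempty open subset of $\mathbb C^n$ coincide, this means $b_j^{\,m}$ divides $p\,a_j^{\,m}$ in $\mathbb C[z_1,\dots,z_n]$. Because $\gcd(a_j,b_j)=1$ forces $\gcd(a_j^{\,m},b_j^{\,m})=1$ in the UFD, it follows that $b_j^{\,m}$ divides $p$ for every $m$. Comparing degrees gives $m\deg b_j\le\deg p$ for all $m\ge 1$, and since $p\not\equiv 0$ this forces $\deg b_j=0$, i.e.\ $b_j$ is a nonzero constant. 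Hence $f_j=a_j/b_j$ is a polynomial. Applying this to each $j=1,\dots,n$ finishes the proof.

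There is no serious obstacle here; the argument is elementary once its shape is seen. The one point deserving care is the passage between holomorphic functions on $\Omega$ and elements of $\mathbb C(z_1,\dots,z_n)$ -- namely, promoting the holomorphic identities $u\,f_j^{\,m}=(\text{polynomial})$ on $\Omega$ to identities of rational functions, so that divisibility in the polynomial ring is legitimate -- together with the observation, which is really the crux, that it is precisely the availability of $\alpha=m\,e_j$ with $m$ arbitrarily large that manufactures the unbounded-degree contradiction ruling out a nonconstant denominator. This is also why the lemma genuinely requires $u\cdot f^\alpha$ to be polynomial for all $\alpha$, and not merely for $|\alpha|\le 1$.
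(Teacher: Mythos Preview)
Your proof is correct and follows essentially the same route as the paper's: both arguments set $\alpha=0$ to see that $u$ is a nonzero polynomial, write each $f_j$ as a reduced fraction, and then use the hypothesis for $\alpha=m e_j$ with $m$ arbitrary together with unique factorization and a degree count to force the denominator to be constant. Your write-up is in fact a bit tidier, and your remark about promoting identities on $\Omega$ to identities in $\mathbb{C}(z_1,\dots,z_n)$ makes explicit a point the paper leaves implicit.
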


  \begin{lem}
  	\label{lem2}
  	Let $u$ be a holomorphic function on a domain $\Omega \subset \mathbb{C}^n$, which is the complex Jacobian determinant of $f$, a biholomorphic mapping defined on $\Omega$.  If for all multiindices $\alpha$, $uf^\alpha$ is a holomorphic polynomial, then $f$ is a polynomial mapping.
  	\begin{proof}
  		Consider that $uf_1^j$ is a polynomial for each $j$. In particular, $u$ is a polynomial by setting $j=0$, and $f_1$ is rational by the division $f_1=\frac{uf_1}{u}$.  Since the ring of polynomials over $\mathbb{C}$ in several variables is a unique factorization domain, we can express $f_1$ as a fraction in lowest terms. Let $f_1=\frac{p}{q}$, with $p$ and $q$ holomorphic polynomials which do not share any irreducible factors.  For each $j$, let $uf_1^j=P_j$.  We now substitute for $f_1$ so that $u \cdot \frac{p^j}{q^j}=P_j$. This means that $up^j=q^jP_j$. But since $p$ and $q$ share no irreducible factors, if we envision breaking each side into irreducible factors, all the factors of $p^j$ must therefore appear in the factorization of $P_j$. Say that $P_j=p^jQ_j$, $Q_j$ a polynomial.  After cancelling $p^j$, we have arrived at $u=q^jQ_j$. Since this holds for all $j$, by counting degrees our only possibility is that $deg(q)=0$, or else $Q_j=0$ for all $j$. If $Q_j=0$, then $u=0$ and $f$ fails to be biholomorphic. This means that we must have $deg(q)=0$, and so $f_1$ is a polynomial.  Similarly for $f_2, \cdots, f_n$.
  	\end{proof}
  \end{lem}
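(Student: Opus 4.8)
The plan is to handle one component of $f$ at a time, using the hypothesis only for the multi-indices $\alpha$ supported on a single coordinate, together with unique factorization in $\mathbb{C}[z_1,\dots,z_n]$.

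First I would read off the immediate consequences of the hypothesis. Taking $\alpha=0$ shows that $u=uf^{0}$ is a holomorphic polynomial; taking $\alpha=e_k$ (the $k$th standard multi-index) shows that $uf_k$ is a holomorphic polynomial, so each component $f_k=(uf_k)/u$ is a rational function on $\Omega$. I would also record at this point that $u$ is not identically zero, since it is the complex Jacobian determinant of a biholomorphism and hence nowhere vanishing; this fact is needed for the degree count to come.

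Next I would fix $k$ and write $f_k=p/q$ as a fraction in lowest terms, i.e. with $p,q\in\mathbb{C}[z_1,\dots,z_n]$ having no common irreducible factor, which is legitimate because the polynomial ring is a UFD. For each integer $j\ge 1$, applying the hypothesis to $\alpha=je_k$ gives that $uf_k^{\,j}$ is a polynomial $P_j$, whence $up^{j}=q^{j}P_j$ as polynomials. Since $\gcd(p,q)=1$ we also have $\gcd(p^{j},q^{j})=1$, so $q^{j}\mid up^{j}$ forces $q^{j}\mid u$. As $u\not\equiv 0$, this yields $j\deg q\le\deg u$ for every $j$, hence $\deg q=0$, i.e. $q$ is a nonzero constant and $f_k$ is a polynomial. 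Repeating the argument for $k=1,\dots,n$ shows that $f$ is a polynomial mapping.

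I expect no real obstacle here beyond bookkeeping with multi-indices: the one spot that genuinely uses algebraic structure rather than formal manipulation is the implication $q^{j}\mid up^{j}\Rightarrow q^{j}\mid u$, which is precisely Euclid's lemma in the UFD $\mathbb{C}[z_1,\dots,z_n]$, together with the accompanying remark that $u\not\equiv 0$ (so that the forced conclusion is $\deg q=0$ rather than the degenerate case $u\equiv 0$, which is incompatible with $f$ being biholomorphic).
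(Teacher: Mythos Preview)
Your proposal is correct and follows essentially the same route as the paper: reduce to rationality of each $f_k$, write $f_k=p/q$ in lowest terms, use the UFD property to force $q^j\mid u$ for all $j$, and conclude $\deg q=0$ since $u\not\equiv 0$. The only cosmetic difference is that the paper first factors $P_j=p^jQ_j$ and then writes $u=q^jQ_j$, whereas you invoke Euclid's lemma directly to obtain $q^j\mid u$; these are the same argument.
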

  
 We have seen that mappings to QDP's can be expressed with the Bergman kernel and are well-behaved on circular domains.  In the next section, we'll explore some instances in which quadrature domains in several dimensions don't behave as well. 

\section{Several-Dimensional Counterexamples}

Our approach to higher-dimensional counterexamples will be to find biholomorphisms which map into quadrature domains in ways that are inaccessible in the plane.  We will find that the synergy between polynomials and the Bergman span of a quadrature domain is not as strong as in the plane.

We begin with a question on Bergman coordinates.  It has been noted here and in \cite{Bell5} that the fact in the plane that the image of a Bergman coordinate is a quadrature domain, may not hold in several variables.  We prove this now by writing down a biholomorphic mapping from the polydisc $\mathbb{D}^2 \subset \mathbb{C}^2$ which, although a Bergman coordinate, fails to map onto a quadrature domain.

\begin{prop}
	\label{BcornonQ}
	Let $f : \mathbb{D}^2 \rightarrow \mathbb{C}^2$ be defined by the formula $f(z_1,z_2)= (\frac{1}{3-z_1-z_2},z_1).$ This $f$ is a Bergman coordinate on $\mathbb{D}^2$, yet $f(\mathbb{D}^2)$ fails to be a quadrature domain.
	\begin{proof}
		We first appeal to Proposition \ref{QDcross} to see that $\mathbb{D}^2$ is a QDP. Hence every polynomial is in the Bergman span of $\mathbb{D}^2$, and each component of $f$ is a quotient of elements of the Bergman span.
		
		Since $3-z_1-z_2$ does not vanish on $\overline{\mathbb{D}^2}$, $f$ is holomorphic on $\mathbb{D}^2$ and leads to a bounded set $f(\mathbb{D}^2)$.  And if $f(z_1,z_2)=f(\zeta_1,\zeta_2)$, then from the second component function of $f$, $\zeta_1=z_1$. From the first component function, $3-z_1-z_2=3-\zeta_1-\zeta_2$, and it follows that $\zeta_2=z_2$.  Thus $f$ is a biholomorphism.
		
		We now see that $f(\mathbb{D}^2)$ is not a quadrature domain.  A simple calculation yields that the complex Jacobian determinant of $f$ is 
		\[u(z_1,z_2)= \frac{1}{(3-z_1-z_2)^2} \cdot 0 - \frac{1}{(3-z_1-z_2)^2}\cdot 1 = \frac{-1}{(3-z_1-z_2)^2}.\]
		We will show that $u$ cannot possibly be a member of the Bergman span of $\mathbb{D}^2$, and hence that the image of $f$ is not a quadrature domain.
		
		Any member of the Bergman span of $\mathbb{D}^2$ is a linear combination of terms of the form
		\begin{equation}
		\label{polyB}
		\frac{\partial ^\alpha K_{\mathbb{D}^2}(z,w)}{\partial \bar{w}^\alpha}|_{w=\omega}=\frac{\partial^{\alpha_1}K_{\mathbb{D}}(z_1, w_1)}{\partial \bar{w_1}^\alpha}|_{w_1=\omega_1} \cdot \frac{\partial ^{\alpha_2}K_{\mathbb{D}}(z_2,w_2)}{\partial \bar{w_2}^{\alpha_2}}|_{w_2=\omega_2}.
		\end{equation}
		Each of the factors on the right in (\ref{polyB}) is rational, and each depends solely on $z_1$ or on $z_2$. For contradiction we suppose that we can write $u$ as a linear combination of functions as in (\ref{polyB}). We would have \[\frac{-1}{(3-z_1-z_2)^2}= \sum_{j \le J} \frac{P_j(z_1)}{Q_j(z_1)}\cdot \frac{R_j(z_2)}{S_j(z_2)},  \]
		where each $P_j,Q_j,R_j,S_j$ is a polynomial of one variable. Clear denominators by multiplying each side by the product of all denominators appearing in the equation. The result is that:
		\[-\prod_{j \le J}Q_j(z_1)S_j(z_2)=(3-z_1-z_2)^2\sum_{j \le J} P_j(z_1)R_j(z_2)\prod_{k \ne j}Q_k(z_1)S_k(z_2) . \]
		Since the ring of holomorphic polynomials over any number of variables is a unique factorization domain, we can compare the two sides of this relation and notice a contradiction.  Each irreducible factor on the left side must be a polynomial of one variable depending solely on $z_1$ or $z_2$. Yet $3-z_1-z_2$ appears as an irreducible factor on the right side. So the left side contains only pure factors in $z_1$ and $z_2$, while there exists on the right side an irreducible factor which depends on both simultaneously.  This violates unique factorization.
\end{proof}
\end{prop}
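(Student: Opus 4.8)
The plan is to split the statement into two independent claims: that $f$ is a Bergman coordinate, and that $f(\mathbb{D}^2)$ is not a quadrature domain. For the first, I would invoke Proposition~\ref{QDcross} to see that $\mathbb{D}^2 = \mathbb{D}\times\mathbb{D}$ is a QDP, so every holomorphic polynomial on $\mathbb{D}^2$ — in particular $1$, $z_1$, and $3-z_1-z_2$ — lies in the Bergman span. Then each component of $f$, namely $1/(3-z_1-z_2)$ and $z_1$, is manifestly a quotient of polynomials and hence a quotient of Bergman span elements, so $f$ is a Bergman coordinate once we know it is biholomorphic onto a bounded set. That verification is routine: $3-z_1-z_2$ stays away from $0$ on $\overline{\mathbb{D}^2}$ (its modulus is at least $1$ there), so $f$ is holomorphic and bounded; and injectivity is immediate since the second coordinate recovers $z_1$, after which the first coordinate equation forces the $z_2$'s to agree.

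For the second claim I would apply Theorem~\ref{Jacobthm}: $V=f(\mathbb{D}^2)$ is a quadrature domain if and only if the complex Jacobian determinant $u$ of $f$ lies in the Bergman span of $\mathbb{D}^2$. A short computation gives $u(z_1,z_2) = -1/(3-z_1-z_2)^2$, so the whole proposition reduces to showing this $u$ is \emph{not} in the Bergman span. Here I would use the product formula~(\ref{eq1}) for $K_{\mathbb{D}^2}$: since $K_{\mathbb{D}}(z,w)=1/\bigl(\pi(1-z\bar w)^2\bigr)$, every generator $\frac{\partial^\alpha K_{\mathbb{D}^2}(z,w)}{\partial\bar w^\alpha}\big|_{w=\omega}$ factors as a product $g(z_1)h(z_2)$ with $g,h$ rational functions of a single variable. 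Thus any Bergman span element can be written as a finite sum $\sum_{j} \frac{P_j(z_1)R_j(z_2)}{Q_j(z_1)S_j(z_2)}$ with $P_j,Q_j,R_j,S_j$ univariate polynomials. Assuming $u$ equals such a sum and clearing all denominators yields a polynomial identity in $\mathbb{C}[z_1,z_2]$; comparing the two sides via unique factorization produces the contradiction, since the right-hand side carries the factor $(3-z_1-z_2)^2$ while the left-hand side is a product of polynomials each depending on only one of $z_1,z_2$.

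The step I expect to be the crux is exactly this closing unique-factorization argument: one must argue carefully that, after clearing denominators, the irreducible polynomial $3-z_1-z_2$ genuinely divides the right-hand side and yet cannot divide any product of univariate polynomials, so it cannot be matched on the left — this is where both the irreducibility of $3-z_1-z_2$ and its honest dependence on both variables are essential. Everything else (the biholomorphy check, the Jacobian computation, and the observation that the components of $f$ are quotients of Bergman span elements) is bookkeeping; the only other point worth stating explicitly is the rationality of the one-variable disc kernel and its $\bar w$-derivatives evaluated at a fixed node, which makes the separation-of-variables reduction legitimate.
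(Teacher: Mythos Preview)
Your proposal is correct and follows essentially the same route as the paper: verify $f$ is a Bergman coordinate via the QDP property of $\mathbb{D}^2$, check biholomorphy directly, compute the Jacobian $u=-1/(3-z_1-z_2)^2$, and then use the product structure of $K_{\mathbb{D}^2}$ together with unique factorization in $\mathbb{C}[z_1,z_2]$ to show $u$ cannot lie in the Bergman span. The crux you identify --- that the irreducible factor $3-z_1-z_2$ depends on both variables and so cannot appear in a product of univariate polynomials --- is exactly the paper's closing argument.
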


The consequence is that, unlike in the plane, global Bergman coordinates cannot be used generically in several variables to map into quadrature domains in the attempt to show a kind of `generalized Riemann mapping theorem' \cite{Bell3}.  While in the plane the image of a biholomorphism is a quadrature domain if $u$ is in the Bergman span or if $f$ is a Bergman coordinate, in several variables the only avenue for mapping into quadrature domains is the condition on $u$.

There is in several variables a flexibility in choosing biholomorphic mappings on a domain. Having multiple variables allows us to save the property of one-to-oneness even in the presence of pathological components in a mapping, even while retaining a simple Jacobian determinant.  This is the approach to the next example, which shows that not every quadrature domain is a QDP in several variables. 

\begin{prop}
	Let $f(z_1,z_2)=(e^{z_1+z_2}+z_1,z_1+z_2)$ be defined on $\mathbb{D}^2$, the unit polydisc in $\mathbb{C}^2$.  Then $f$ is a biholomorphism, $V=f(\mathbb{D}^2)$ is a quadrature domain, and if $\zeta=(\zeta_1,\zeta_2)$ is the coordinate on $V$, then every monomial of the form $\zeta_2^k$ is in the Bergman span of $V$, whereas for $k>0$, $\zeta_1^k$ is not contained in the Bergman span of $V$.
	\begin{proof}
	We begin by noting that $f$ is biholomorphic.  If $f(z_1,z_2)=f(w_1,w_2)$, then from the second component of $f$, $z_1+z_2=w_1+w_2$, and so $e^{z_1+z_2}=e^{w_1+w_2}$.  Equating the first components of $f$ will therefore yield $z_1=w_1$, and once again equating the second components, this forces also $z_2=w_2$.
	
	To show that $V$ is a quadrature domain, we appeal to Theorem \ref{thm1} and calculate $u$, the complex Jacobian determinant of $f$. We find after differentiating that
	\[u(z_1,z_2)=(e^{z_1+z_2}+1) \cdot 1 - e^{z_1+z_2} \cdot 1=1.     \]
	We know by Proposition \ref{QDcross} that $\mathbb{D}^2$ is a QDP, so the function $1$, being a polynomial, is in the Bergman span of $\mathbb{D}^2$, and so by Theorem \ref{Jacobthm}, $V$ is a quadrature domain.
	
 Notice that for all $k$, $(z_1+z_2)^k$ is in the Bergman span of $\mathbb{D}^2$ since it is a QDP, and that $(z_1+z_2)^k=u \cdot f_2^k$. Lemma \ref{lem1} guarantees that on $V$, $\zeta_2^k$ is in the Bergman span.  Next, recall that all elements of the Bergman span of $\mathbb{D}^2$ are rational functions, since it has rational Bergman kernel.  Thus for all $k>0$, $(e^{z_1+z_2}+z_1)^k$ fails to be in the Bergman span of $\mathbb{D}^2$. By Lemma \ref{lem1} $\zeta_1^k$ fails to be in the Bergman span of $V$ for every $k \geq 1$.
		\end{proof}
	\end{prop}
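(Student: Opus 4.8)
The plan is to settle the four assertions in turn: that $f$ is a biholomorphism, that $V$ is a quadrature domain, that each $\zeta_2^k$ lies in the Bergman span of $V$, and that no $\zeta_1^k$ with $k\ge 1$ does.

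I would first check injectivity directly from the components. If $f(z_1,z_2)=f(w_1,w_2)$, the equality of second coordinates gives $z_1+z_2=w_1+w_2$, hence $e^{z_1+z_2}=e^{w_1+w_2}$; feeding this into the equality of first coordinates forces $z_1=w_1$, and then $z_2=w_2$. Since $f$ is holomorphic on all of $\mathbb{D}^2$ and one-to-one, it is a biholomorphism onto $V=f(\mathbb{D}^2)$. Alongside this I would compute the complex Jacobian determinant: with $\partial f_1/\partial z_1=e^{z_1+z_2}+1$, $\partial f_1/\partial z_2=e^{z_1+z_2}$, and $\partial f_2/\partial z_1=\partial f_2/\partial z_2=1$, one gets
\[u(z_1,z_2)=(e^{z_1+z_2}+1)\cdot 1-e^{z_1+z_2}\cdot 1=1.\]

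For the quadrature property I would cite Proposition \ref{QDcross}: $\mathbb{D}^2$ is a product of planar quadrature domains, hence a QDP, so the constant function $1=u$ belongs to the Bergman span of $\mathbb{D}^2$, and Theorem \ref{Jacobthm} then delivers that $V$ is a quadrature domain. For the powers of $\zeta_2$: since $u=1$ we have $u\cdot f_2^k=(z_1+z_2)^k$, a holomorphic polynomial, which lies in the Bergman span of $\mathbb{D}^2$ exactly because $\mathbb{D}^2$ is a QDP; as $u\cdot f_2^k=\Lambda_1(\zeta_2^k)$, Lemma \ref{lem1} then places $\zeta_2^k$ in the Bergman span of $V$.

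The genuinely delicate point — and the step I expect to be the crux — is that $\zeta_1^k$ is \emph{not} in the Bergman span of $V$ for $k\ge 1$. By Lemma \ref{lem1} this is equivalent to $\Lambda_1(\zeta_1^k)=u\cdot f_1^k=(e^{z_1+z_2}+z_1)^k$ not lying in the Bergman span of $\mathbb{D}^2$. Here I would use that the Bergman kernel of $\mathbb{D}^2$ is rational, being the product of the two rational one-variable disc kernels, so that every element of its Bergman span is a rational function of $(z_1,z_2)$; it therefore suffices to show $(e^{z_1+z_2}+z_1)^k$ is not rational. This is the spot where the transcendence of the exponential has to be invoked. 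The cleanest argument I see is that $(e^{z_1+z_2}+z_1)^k$ extends to an entire function on $\mathbb{C}^2$ which is not a polynomial — for instance its restriction to $\{z_1=0\}$ is $e^{kz_2}$, with exponential growth along the real axis — whereas any rational function agreeing with it on $\mathbb{D}^2$ would, by analytic continuation, have to be an entire rational function, hence a polynomial, a contradiction. Thus $(e^{z_1+z_2}+z_1)^k$ is outside the Bergman span of $\mathbb{D}^2$, and Lemma \ref{lem1} concludes that $\zeta_1^k$ is not in the Bergman span of $V$.
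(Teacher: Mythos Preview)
Your proof is correct and follows essentially the same route as the paper's: injectivity from the components, $u\equiv 1$, then Proposition~\ref{QDcross}, Theorem~\ref{Jacobthm}, and Lemma~\ref{lem1} together with the rationality of the Bergman span of $\mathbb{D}^2$. The only difference is that you spell out why $(e^{z_1+z_2}+z_1)^k$ cannot be rational, whereas the paper treats this as evident; your argument via analytic continuation and the restriction to $\{z_1=0\}$ is a sound way to make that step explicit.
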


The biholomorphism of the previous proposition also provides a counterexample to the algebraicity of the Bergman kernel of quadrature domains. In the plane, bounded quadrature domains have algebraic Bergman kernel \cite{Bell6}, so this is another loss of special properties in higher dimensions.

\begin{prop}
There exist bounded quadrature domains with non-algebraic Bergman kernel.

\begin{proof}
Let $f(z_1,z_2)=(e^{z_1+z_2}+z_1,z_1+z_2)$.It is one-to-one with inverse $F(\zeta_1, \zeta_2)=(\zeta_1-e^{\zeta_2},\zeta_2-\zeta_1+e^{\zeta_2}).$ Now $f$ is biholomorphic on all of $\mathbb{C}^2$ and has complex Jacobian determinant constantly equal to $1$.  By the transformation formula for the Bergman kernel, if $V=f(\mathbb{D}^2)$, $K_V(\zeta, \omega)=K_{\mathbb{D}^2}(F(\zeta),F(\omega))=\frac{1}{\pi^2}((1-(\zeta_1-e^{\zeta_2})\overline{(\omega_1-e^{\omega_2})})^{-2}(1-(\zeta_2-\zeta_1+e^{\zeta_2})(\overline{\omega_2-\omega_1+e^{\omega_2}}))^{-2}.$ 
\end{proof}
\end{prop}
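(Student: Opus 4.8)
\emph{Proof proposal.} The plan is to reuse the biholomorphism $f(z_1,z_2) = (e^{z_1+z_2}+z_1,\, z_1+z_2)$ from the previous proposition. We have already seen there that $V = f(\mathbb{D}^2)$ is a bounded quadrature domain: the complex Jacobian determinant of $f$ is identically $1$, which lies in the Bergman span of the QDP $\mathbb{D}^2$ by Proposition \ref{QDcross}, so Theorem \ref{Jacobthm} applies, and $V$ is bounded because $f$ is entire and $\overline{\mathbb{D}^2}$ is compact. What remains is to write $K_V$ explicitly and show it is not algebraic.

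First I would invert $f$: solving $\zeta_1 = e^{z_1+z_2}+z_1$ and $\zeta_2 = z_1+z_2$ gives $e^{z_1+z_2}=e^{\zeta_2}$, hence $F(\zeta_1,\zeta_2) = (\zeta_1 - e^{\zeta_2},\, \zeta_2 - \zeta_1 + e^{\zeta_2})$, whose complex Jacobian determinant is again identically $1$. Applying the Bergman kernel transformation law together with the polydisc formula $K_{\mathbb{D}^2}(z,w) = \pi^{-2}(1 - z_1\bar w_1)^{-2}(1 - z_2\bar w_2)^{-2}$ then yields
\[ K_V(\zeta,\omega) = \frac{1}{\pi^2}\bigl(1 - (\zeta_1 - e^{\zeta_2})\,\overline{(\omega_1 - e^{\omega_2})}\bigr)^{-2}\bigl(1 - (\zeta_2 - \zeta_1 + e^{\zeta_2})\,\overline{(\omega_2 - \omega_1 + e^{\omega_2})}\bigr)^{-2}. \]

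The substantive step is to show this cannot be an algebraic function of $(\zeta,\bar\omega)$. If $K_V$ satisfied a nontrivial polynomial identity in $(\zeta_1,\zeta_2,\bar\omega_1,\bar\omega_2, K_V)$, then fixing $\omega$ at a generic point of $V$ — in particular with $a := \overline{\omega_1 - e^{\omega_2}} \ne 0$, writing $b := \overline{\omega_2 - \omega_1 + e^{\omega_2}}$, and chosen so the identity remains a nontrivial algebraic relation for $\zeta \mapsto K_V(\zeta,\omega)$ over $\mathbb{C}(\zeta_1,\zeta_2)$ — the explicit closed form above would satisfy that same relation throughout its natural domain of holomorphy, by the identity theorem. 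Restricting to the complex line $\zeta_1 = 0$ gives $\pi^{-2}(1 + a e^{\zeta_2})^{-2}(1 - b\zeta_2 - be^{\zeta_2})^{-2}$, which would then be an algebraic function of the single variable $\zeta_2$ and hence have only finitely many poles in $\mathbb{C}$. But it has a pole at every solution of $e^{\zeta_2} = -1/a$, of which there are infinitely many, differing by integer multiples of $2\pi i$, and at all but finitely many of these the second factor is finite and nonzero. This contradiction shows $K_V$ is non-algebraic. (Alternatively one can argue that the polar hypersurface $\{1 - a\zeta_1 + ae^{\zeta_2} = 0\}$, parametrized by $s \mapsto (a^{-1}+e^s, s)$, lies in no proper algebraic hypersurface, since the linear independence of $1, e^s, e^{2s}, \dots$ over $\mathbb{C}(s)$ forces any polynomial vanishing along it to vanish identically.)

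The main obstacle is making the non-algebraicity precise: one must pin down what "algebraic Bergman kernel" is to mean and transfer any such polynomial relation from $V \times V$ to the explicit meromorphic formula via analytic continuation, after which the contradiction is just the elementary fact that $e^{\zeta_2}$ produces a periodic — hence infinite — family of poles along a line, which no genuine algebraic function of one variable can have.
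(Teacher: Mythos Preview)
Your approach is essentially identical to the paper's: the same map $f$, the same inverse $F$, the same application of the transformation formula, and the same explicit expression for $K_V$. The only difference is that the paper simply displays the formula and lets the presence of $e^{\zeta_2}$ speak for itself, whereas you go on to give a genuine argument for non-algebraicity (fix $\omega$, restrict to $\zeta_1=0$, and exploit the $2\pi i$-periodic family of poles); this extra step is correct and in fact supplies a justification the paper leaves implicit.
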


We next address a question of Bell from \cite{Bell5}, whether the only $1$-point QDP's are constant-Jacobian images of circular domains. By an example we will demonstrate that it is possible to map a circular domain to a $1$-point quadrature domain by a mapping which does not have constant Jacobian. Consider the holomorphic mapping from $\mathbb{C}^2$ to itself: $f(z_1,z_2)=(z_1^2-z_2,z_1+z_2)$. The Jacobian determinant of $f$ is $u(z_1,z_2)=2z_1+1$. By the implicit function theorem, there is a neighborhood of $(0,0)$ on which $f$ is one-to-one. Within this neighborhood will be a polydisc $\Delta$ centered at $(0,0)$. By Proposition \ref{QDcross}, $\Delta$ is a QDP, which implies that $u$ is in the Bergman span of $\Delta$.  Even better, since $u$ and $f$ are polynomial, $uf^\alpha$ is polynomial for all multiindices $\alpha$, and so $f(\Delta)$ is a $1$-point QDP by Theorem \ref{CircThm}. The content of all this is the next proposition.

\begin{prop}
There exist $1$-point QDP's which are not the biholomorphic image of a circular domain under a constant-Jacobian biholomorphic map.
\begin{proof} Let $f$ and $\Delta$ be as given just before the statement of the current proposition. Then $f(\Delta)$ is a QDP, and an image of $\Delta$ under a map with non-constant Jacobian.  There is one technicality we must address. We must know that there is no other circular domain $\Omega$ and biholomorphism $g$ defined on $\Omega$ with constant Jacobian such that $g(\Omega)=f(\Delta).$ To exclude this possibility, we use the chain rule and Cartan's Uniqueness theorem, together with our knowledge of the automorphism group of a polydisc.

If such $\Omega$ and $g$ existed, then since $g(\Omega)=f(\Delta)$ contains $(0,0)$, we can find $\sigma$, an automorphism of $\Delta$ such that $\sigma(0,0)=f^{-1} \circ g (0,0)$. Then the mapping $g^{-1} \circ f \circ \sigma$ is a biholomorphism $\Delta \rightarrow \Omega$ which fixes the origin.  By Cartan's theorem, it is a linear mapping, which for convenience we will call $\lambda$. We have then $f=g \circ \lambda \circ \sigma^{-1}$. Since both $g$ and $\lambda$ have constant Jacobian determinants, the chain rule yields that the Jacobian determinant $u$ of $f$ is $u(z_1,z_2)=c s(z_1,z_2)$, where $s$ is the Jacobian determinant of $\sigma^{-1}$ and $c$ is a constant. But since $\sigma^{-1}$ is an automorphism of $\Delta$, both of its component functions are linear fractional transforms of $z_1$ or $z_2$.  For some constants $a_j, b_j, c_j, d_j$, $j=1,2$, we will have that for some constant $k$, \[u=c \cdot (\prod_{j=1,2}\frac{\partial}{\partial z_j}\frac{a_jz_j+b_j}{c_jz_j+d_j}) = \frac{k}{(c_1z_1+d_1)^2(c_2z_2+d_2)^2}. \]
But regardless the coefficients it is clear that this expression will never be a degree-one polynomial. This contradicts the formula for $u$ found just before the statement of the proposition.
\end{proof}
\end{prop}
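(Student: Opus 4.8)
The plan is to produce a concrete $1$-point QDP that is manifestly obtained from a circular domain by a biholomorphism with \emph{nonconstant} Jacobian, and then to show that the very same domain cannot be obtained from \emph{any} circular domain by a biholomorphism with constant Jacobian. For the construction, use $f(z_1,z_2)=(z_1^2-z_2,\,z_1+z_2)$ together with a polydisc $\Delta$ centered at the origin and small enough that $f|_\Delta$ is injective (as in the discussion preceding the statement; the Jacobian determinant of $f$ at $0$ is $1\neq 0$). By Proposition~\ref{QDcross}, $\Delta$ is a QDP, and in particular a bounded circular domain containing $0$; moreover $f$ is a polynomial biholomorphism on $\Delta$ with $f(0)=(0,0)$. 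Theorem~\ref{CircThm} then yields that $V:=f(\Delta)$ is a $1$-point QDP at $(0,0)$. Its defining Jacobian $u(z)=2z_1+1$ is nonconstant, so $V$ is a $1$-point QDP realized by a nonconstant-Jacobian biholomorphic image of a circular domain. What remains---and this is the real content of the proposition---is to rule out the existence of a constant-Jacobian realization of $V$.

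To do this I argue by contradiction: suppose $\Omega\subset\mathbb{C}^2$ is a bounded circular domain containing $0$ and $g\colon\Omega\to V$ is a biholomorphism with constant Jacobian determinant. Since $g(0)\in V=f(\Delta)$, choose an automorphism $\sigma$ of the polydisc $\Delta$ with $\sigma(0)=f^{-1}(g(0))$; then $\lambda:=g^{-1}\circ f\circ\sigma$ is a biholomorphism $\Delta\to\Omega$ that fixes the origin. As $\Delta$ and $\Omega$ are both bounded circular domains containing $0$, Cartan's Uniqueness Theorem forces $\lambda$ to be a linear map. Solving this relation for $f$ gives $f=g\circ\lambda\circ\sigma^{-1}$, so by the chain rule the Jacobian determinant of $f$ equals $c\cdot s$, where $c\neq 0$ is constant (being the product of the constant Jacobian determinants of $g$ and $\lambda$) and $s$ is the Jacobian determinant of $\sigma^{-1}$.

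Finally I invoke the structure of the automorphism group of a polydisc, exactly as in the proof of Proposition~\ref{sccp}: every automorphism of $\Delta$ is a coordinate permutation composed with a product of one-variable M\"{o}bius maps. Consequently $s$ is, up to a nonzero constant, of the form $1/(\ell_1^2\,\ell_2^2)$ where each $\ell_i$ is a polynomial of degree at most one in a single coordinate; hence $c\cdot s$ is either a nonzero constant (when both $\ell_i$ are constant) or a rational function with a genuine pole, and in neither case can it equal the degree-one polynomial $2z_1+1$. This contradicts the computed Jacobian determinant of $f$, so no such $\Omega$ and $g$ exist, and the proposition follows. The only step demanding care rather than routine calculation is the bookkeeping around Cartan's theorem---verifying that the composite $g^{-1}\circ f\circ\sigma$ genuinely fixes the origin and that $\Omega$, being bounded, circular, and containing $0$, is an admissible domain for Cartan's Uniqueness Theorem; the concluding degree-versus-pole comparison is then immediate.
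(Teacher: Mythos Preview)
Your argument is correct and follows essentially the same route as the paper: the same example $f(z_1,z_2)=(z_1^2-z_2,z_1+z_2)$ on a small polydisc, the same reduction via an automorphism $\sigma$ and Cartan's theorem to $f=g\circ\lambda\circ\sigma^{-1}$, and the same contradiction from the form of the Jacobian of a polydisc automorphism against the degree-one polynomial $2z_1+1$. Your write-up is if anything slightly more careful in making explicit the hypotheses (bounded, containing $0$) on $\Omega$ needed to invoke Cartan's theorem.
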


We have employed the definition of $1$-point QDP from the previous section. It may still be that the only $1$-point QDP's which furthermore have a quadrature identity of one term are constant-Jacobian images of circular domains-to this question we do not provide an answer.

\section{Quadrature Domain Density}

We have seen that certain desirable properties of quadrature domains in the plane do not extend to the case of several dimensions, but in this section we examine a property which conceivably stands a good chance of passing to higher dimensions; namely, smooth density of quadrature domains among $\mathcal{C}^\infty$ smooth domains.

Finding density of quadrature domains in $\mathbb{C}^n$ could be viewed as establishing a `substitute Riemann mapping theorem' in several variables. Instead of finding the disc as a biholomorphy type for simply connected planar domains, we would be finding that some class of multidimensional domains are biholomorphically equivalent to `generalized discs', the generalization being the substitution of the mean-value property by a quadrature identity.

An approach to the problem of constructing biholomorphisms to quadrature domains with the added prospect of closeness to the identity function was proposed in \cite{Bell5}, and given explicit form for the case of the ball.  In this section, we generalize that argument to discover a class of domains which admit biholomorphisms to quadrature domains. Among this class will be all smooth bounded convex domains, and for some cases, we will be able to further infer that the domains may be mapped to quadrature domains $\mathcal{C}^\infty$ nearby.  A crucial piece of the approach is Condition R.

\begin{definition}
A bounded $\mathcal{C}^\infty$ smooth domain $\Omega \subset \mathbb{C}^n$ satisfies Condition R if the Bergman projection of $\Omega$ maps $\mathcal{C}^\infty (\overline{\Omega})$ into itself.
\end{definition}
In \cite{BL}, it is revealed that Condition R implies density of the Bergman span in $\mathcal{C}^\infty(\overline{\Omega})$ (though the words `Bergman span' are not used there). This density of Bergman span elements will be essential to the success of the ideas below.

Roughly speaking, a smooth bounded domain with convex cross-sections in the $z_n$ direction will be biholomorphic to a quadrature domain, as long as the domain has a pseudoconvex shadow onto the other $n-1$ dimensions, and the domain contains within it the graph of a smooth function over that shadow.

  \begin{thm}
  	\label{convexmain}
  	Let  $ \Omega  \subset \mathbb{C} ^n$ be a $\mathcal{C} ^{\infty} $ smooth bounded domain satisfying Condition R such that the projection $ \Pi _{n-1} \Omega $ of $\Omega$ to the first $n-1$ coordinates is pseudoconvex, and such that for each $z' \in \Pi _{n-1} \Omega$, the $z_n$-cross-section $ \{\tau \in \mathbb{C} \thinspace | \thinspace (z',\tau) \in \Omega \} $ is convex.  Assume furthermore that there exists $ \gamma \in \mathcal{C} ^ {\infty} (\Pi _{n-1} \Omega)$ with the property that for each $z' \in \Pi _{n-1} \Omega$, the point  $ (z', \gamma(z')) \in \Omega $.  Then there exists a biholomorphism $f$ defined on $\Omega$ such that $f(\Omega)$ is a quadrature domain.
  \end{thm}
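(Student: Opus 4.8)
\emph{Proof proposal.} The plan is to reduce the statement to Theorem \ref{Jacobthm}: it suffices to produce a biholomorphism $f$ defined on $\Omega$, with image of finite volume, whose complex Jacobian determinant lies in the Bergman span of $\Omega$. Since $\Omega$ is bounded and satisfies Condition R, the Bergman span of $\Omega$ is dense in $\mathcal{C}^\infty(\overline\Omega)$ by \cite{BL}, so one may fix a function $\phi$ in the Bergman span with $\sup_{\overline\Omega}|\phi-1|<1$ (indeed as $\mathcal{C}^\infty$-close to $1$ as desired); then $\mathrm{Re}\,\phi>0$ on $\overline\Omega$, and in particular $\phi$ never vanishes. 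The goal is to construct a biholomorphism of the special form $f(z',z_n)=(z',G(z',z_n))$, where $G$ is holomorphic on $\Omega$ and $\partial G/\partial z_n=\phi$; such an $f$ has complex Jacobian determinant exactly $\phi$, which is in the Bergman span by construction.

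The construction of $G$ is where the hypotheses on $\Pi_{n-1}\Omega$ and on $\gamma$ enter. For each $z'\in\Pi_{n-1}\Omega$ the fiber $\Omega_{z'}=\{\tau\in\mathbb{C}\mid(z',\tau)\in\Omega\}$ is convex and contains $\gamma(z')$, so I would set $H(z',z_n)=\int_{\gamma(z')}^{z_n}\phi(z',\tau)\,d\tau$, integrating along the line segment inside $\Omega_{z'}$; this is a smooth function on all of $\Omega$ with $\partial H/\partial z_n=\phi$. A local computation using a holomorphic antiderivative of $\phi$ in the last variable shows that $\overline\partial H$ has no $d\bar z_n$ component and equals $\pi^*\eta$, where $\pi$ is the projection to the first $n-1$ coordinates and $\eta=-\phi(\cdot,\gamma(\cdot))\,\overline\partial\gamma$ is a smooth $(0,1)$-form on $\Pi_{n-1}\Omega$; since $\pi^*\overline\partial\eta=\overline\partial\,\overline\partial H=0$ and $\pi$ is a submersion, $\eta$ is $\overline\partial$-closed. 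Because $\Pi_{n-1}\Omega$ is pseudoconvex, I can solve $\overline\partial w=\eta$ with $w$ smooth on $\Pi_{n-1}\Omega$ (Hörmander's $L^2$ theory together with interior elliptic regularity; only interior regularity is needed, since $f$ will only be used on the open set $\Omega$). Then $G(z',z_n)=H(z',z_n)-w(z')$ satisfies $\overline\partial G=\pi^*\eta-\pi^*\overline\partial w=0$, so $G$ is holomorphic on $\Omega$, while still $\partial G/\partial z_n=\phi$. (When $n=1$ there is no $z'$, the ``fiber'' is $\Omega$ itself, which is then the convexity hypothesis on $\Omega$ directly, $H$ is already holomorphic, and the $\overline\partial$ step is vacuous.)

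It then remains to verify that $f(z',z_n)=(z',G(z',z_n))$ is one-to-one. If $f(z',z_n)=f(w',w_n)$, the first components give $z'=w'$, and then $G(z',z_n)=G(z',w_n)$; but $G(z',\cdot)$ is holomorphic on the convex planar domain $\Omega_{z'}$ with derivative $\phi(z',\cdot)$ of positive real part, so $G(z',\cdot)$ is univalent by the classical criterion that a holomorphic function on a convex domain whose derivative has positive real part is one-to-one, forcing $z_n=w_n$. Hence $f$ is a biholomorphism onto the domain $V=f(\Omega)$, and since $\Omega$ and $\phi$ are bounded, $\mathrm{vol}(V)=\int_\Omega|\phi|^2\,dV<\infty$. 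As the complex Jacobian determinant of $f$ equals $\phi$, which lies in the Bergman span of $\Omega$, Theorem \ref{Jacobthm} shows that $V$ is a quadrature domain, completing the argument.

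The step I expect to be the main obstacle is the construction of the holomorphic $G$ with prescribed $z_n$-derivative: a naive antiderivative of $\phi$ in $z_n$ destroys holomorphy in $z'$, and repairing this is precisely what the pseudoconvexity of $\Pi_{n-1}\Omega$ (for solvability of $\overline\partial$, with care about the regularity of the solution) and the smooth section $\gamma$ (to pin down a globally defined antiderivative whose $\overline\partial$-error is pulled back from the base) are needed for. The convexity of the fibers then does double duty, both legitimizing the segment integral defining $H$ and delivering the injectivity of $f$.
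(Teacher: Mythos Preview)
Your proposal is correct and follows essentially the same route as the paper: choose a Bergman span element $\phi$ close to $1$ (via Condition R and \cite{BL}), form the fiberwise antiderivative $H(z',z_n)=\int_{\gamma(z')}^{z_n}\phi(z',\tau)\,d\tau$, observe that $\overline\partial H$ descends to a $\overline\partial$-closed $(0,1)$-form on the pseudoconvex base $\Pi_{n-1}\Omega$, solve $\overline\partial$ there to correct $H$ to a holomorphic $G$, and set $f(z',z_n)=(z',G(z',z_n))$, whose Jacobian is $\phi$; then apply Theorem~\ref{Jacobthm}. The only cosmetic differences are that you identify the $\overline\partial$-error explicitly as $\pi^*\bigl(-\phi(\cdot,\gamma(\cdot))\,\overline\partial\gamma\bigr)$ rather than just noting it is constant in $z_n$, and you invoke the Noshiro--Warschawski criterion ($\mathrm{Re}\,\phi>0$ on a convex fiber) for fiberwise univalence where the paper simply says ``close enough to $1$''---these are the same mechanism made explicit.
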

  \begin{proof}
  	Since $\Omega$ satisfies Condition R, the Bergman span of $\Omega$ is dense in $\mathcal{C} ^{\infty} (\overline{\Omega} )$, (see \cite{BL}) and so we may choose $g$ in the Bergman span such that $g$ is close enough to $1$ in $\mathcal{C} ^{\infty} (\overline{\Omega} )$ that the function $U(z',z_n)= \int_{\gamma(z')}^{z_n} g(z', \tau)d \tau$ is one-to-one in $z_n$ for each fixed $z' \in \Pi _{n-1} \Omega$.  The function $U$ is well-defined since the $z_n$-cross-sections of $\Omega$ are simply connected. Note that for each fixed $z'$, $U$ is a holomorphic $z_n$-antiderivative of $g$; we modify $U$ in the first $n-1$ variables in order to obtain a function holomorphic on $\Omega$.
  	
  	Notice first that for $j = 1, 2, ..., n-1$, $ \frac{ \partial U}{\partial \bar{z}_j }$ is constant in the $z_n$ variable.   For, \[ \frac{\partial}{\partial z_n} \frac{\partial}{\partial \bar{z}_j}U=\frac{\partial}{\partial \bar{z}_j}\frac{\partial U}{\partial z_n}=\frac{\partial g}{\partial \bar{z}_j}=0 \]
  	and also \[ \frac{\partial}{\partial \bar{z}_n} \frac{\partial}{\partial \bar{z}_j}U=\frac{\partial}{\partial \bar{z}_j}\frac{\partial U}{\partial \bar{z}_n}=0. \]

  	Thus, for each $j=1,2,...,n-1$, we may regard $\frac{\partial U}{\partial \bar{z}_j}$ as a function on $\Pi_{n-1}\Omega$; which is to say there is a smooth function $\alpha_j$ on $ \Pi _{n-1} \Omega $ such that for every $z'$, $\alpha_j (z')= \frac{\partial}{\partial \bar{z}_j}U (z', z_n)$, for any $z_n$ such that $(z', z_n) \in \Omega$.  Define the smooth $(0,1)$-form $\alpha$ to be \[ \alpha=\sum_{j=0}^{n-1}\alpha_j d \bar{z}_j. \]  Then $\bar{\partial}\alpha=0$. To see this, let $z'$ be a given point of $ \Pi _{n-1} \Omega $, and let $\lambda$ be a complex number such that $(\zeta,\lambda)\in\Omega$ for all $\zeta$ nearby $z'$. Then for each such $\zeta$,\[\frac{\partial}{\partial \bar{z}_k}\alpha_j(\zeta)=\frac{\partial}{\partial \bar{z}_k}\frac{\partial}{\partial \bar{z}_j}U (\zeta, \lambda) =\frac{\partial}{\partial \bar{z}_j}\frac{\partial}{\partial \bar{z}_k}U (\zeta, \lambda)=\frac{\partial}{\partial \bar{z}_j}\alpha_k(\zeta).\]

  	Since $\Pi _{n-1} \Omega$ is pseudoconvex, we may choose $c \in \mathcal{C} ^{\infty}(\Pi _{n-1} \Omega)$ such that $\bar{\partial}c=\alpha.$  Define the smooth function $C$ on $\Omega$ by extending $c$ to be constant in the $z_n$ variable: $C(z',z_n)=c(z')$.  We then have that $U-C$ is holomorphic on $\Omega$.
  	
  	We now define the holomorphic map $f$ on $\Omega$ by \[ f(z',z_n)=(z',U(z',z_n)-C(z',z_n)). \]  Since $C$ is constant in the $z_n$ variable, a quick computation shows that the complex Jacobian determinant of $f$ is $g$.  Furthermore, $f$ is one-to-one on $\Omega$:  if $f(z', z_n)=f(\xi ', \xi _n),$ then $z'= \xi '$, and so $C(z',z_n)=C(\xi', \xi _n)$, which implies that $U(z', z_n)=U(\xi ', \xi _n)$, and thus $z_n$=$\xi_n$ since $U$ is one-to-one in the $n^{th}$ variable.
  	
  	So $f$ is biholomorphic, and since its complex Jacobian determinant  $g$ lies in the Bergman span of $\Omega$, $f(\Omega)$ is a quadrature domain by Theorem \ref{Jacobthm}.
  \end{proof}

  The hypotheses of the theorem apply in particular to all smooth bounded convex domains:

  \begin{thm}
  	\label{convcor}
  	If $\Omega \subset \mathbb{C}^n$ is a $\mathcal{C}^\infty$ smooth bounded convex domain, then there exists a biholomorphism $f$ defined on $\Omega$ such that $f(\Omega)$ is a quadrature domain.
  \end{thm}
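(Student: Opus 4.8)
The plan is to deduce Theorem~\ref{convcor} directly from Theorem~\ref{convexmain} by checking that every $\mathcal{C}^\infty$ smooth bounded convex domain $\Omega \subset \mathbb{C}^n$ satisfies the four hypotheses of the more general theorem: Condition~R, pseudoconvexity of the shadow $\Pi_{n-1}\Omega$, convexity of the $z_n$-cross-sections, and the existence of a smooth section $\gamma$ over the shadow. First I would dispose of the two easy items. The $z_n$-cross-section $\{\tau \in \mathbb{C} \mid (z',\tau) \in \Omega\}$ is the intersection of the convex set $\Omega$ with a complex line (an affine real-$2$-plane), hence convex; that is immediate. Condition~R holds for $\Omega$ because a smooth bounded convex domain is smooth bounded pseudoconvex, and on smooth bounded pseudoconvex domains the $\bar\partial$-Neumann problem is globally regular (Kohn), which is well known to imply Condition~R; alternatively one invokes that convex domains are of finite type, or cites the standard reference for Condition~R on smooth bounded (pseudo)convex domains. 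I would state this with a citation rather than reproving it.

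Next I would handle the shadow. The projection $\Pi_{n-1}\Omega$ of a convex set onto a coordinate subspace is again convex (linear images of convex sets are convex), and it is open since $\Omega$ is open and projection is an open map; a convex open subset of $\mathbb{C}^{n-1}$ is pseudoconvex (indeed holomorphically convex), so the pseudoconvexity hypothesis is satisfied. One should remark that $\Pi_{n-1}\Omega$ need not be smooth, but Theorem~\ref{convexmain} only requires it to be pseudoconvex, so this causes no trouble.

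Finally, the section $\gamma$. For each $z' \in \Pi_{n-1}\Omega$ the fiber $\{\tau \mid (z',\tau)\in\Omega\}$ is a nonempty bounded convex open subset of $\mathbb{C}$; I would take $\gamma(z')$ to be a canonically chosen interior point that depends smoothly on $z'$ — for instance, fix a point $p = (p', p_n) \in \Omega$, and for $z'$ near $p'$ the constant choice $\gamma \equiv p_n$ works on a neighborhood, but to get a global smooth section one should instead argue as follows. Because $\Omega$ is convex and open, the function $z' \mapsto (\text{center of the fiber})$ — say the midpoint of the fiber, or better the point realizing the maximum of $\operatorname{dist}\big((z',\tau),\partial\Omega\big)$ over $\tau$, or simply a smooth partition-of-unity average of local constant sections — is smooth on $\Pi_{n-1}\Omega$ with $(z',\gamma(z'))\in\Omega$. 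The cleanest rigorous route is: the set-valued map sending $z'$ to its fiber is lower semicontinuous with convex values, so by a continuous selection argument (Michael's theorem, or just a partition of unity glueing local constant sections, whose convex combination stays in the convex fiber) there is a continuous — and then, by smoothing, $\mathcal{C}^\infty$ — selection $\gamma$. I expect this construction of a smooth global section to be the only step requiring any real care; everything else is a one-line verification. With all four hypotheses in hand, Theorem~\ref{convexmain} yields the desired biholomorphism $f$ with $f(\Omega)$ a quadrature domain, completing the proof.
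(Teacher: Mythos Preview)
Your overall strategy matches the paper's exactly: verify the four hypotheses of Theorem~\ref{convexmain} and invoke it. The paper treats the cross-sections and the shadow just as you do, and it constructs $\gamma$ by a partition-of-unity average of local constant sections (precisely your parenthetical suggestion), isolated as Lemma~\ref{convexsection}; the key point is that a convex combination of points of $\Omega$ remains in $\Omega$, so no separate smoothing step is needed.

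One correction is needed in your justification of Condition~R. Global regularity of the $\bar\partial$-Neumann problem is \emph{not} known for arbitrary smooth bounded pseudoconvex domains; in fact Christ showed it fails on the Diederich--Fornaess worm, so you cannot appeal to Kohn in that generality. Nor are smooth bounded convex domains necessarily of finite type: a convex boundary can be infinitely flat in complex tangential directions. The correct citation, which the paper uses (\cite{BoSt}), is the Boas--Straube theorem establishing global regularity (hence Condition~R) specifically on smooth bounded convex domains. Your third alternative, ``cite the standard reference,'' is the right instinct, but the first two routes you offered are genuinely incorrect and should be dropped.
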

  \begin{proof}
  	We need only to establish that $\Omega$ satisfies the conditions of Theorem \ref{convexmain}.  That the domain $\Omega$ satisfies Condition R is a consequence of the Sobolev estimates established in \cite{BoSt}.
  	
  	Since $\Omega$ is convex, it follows immediately that $\Pi_{n-1}\Omega$ is convex, and that for each fixed $z' \in  \Pi_{n-1}\Omega$, the set $\{\tau \in \mathbb{C} \thinspace | \thinspace (z',\thinspace \tau) \in \Omega \}$ is convex. In particular, $\Pi_{n-1}\Omega$ is pseudoconvex.
  	
  	At this point we need only to demonstrate the existence of an appropriate function $\gamma$. This piece of the proof we provide separately as a lemma.
  \end{proof}

  \begin{lem}
  	\label{convexsection}
  	If $\Omega$ is a $\mathcal{C}^\infty$ smooth bounded convex domain in $\mathbb{C}^n$, then there exists $\gamma \in \mathcal{C}^\infty (\Pi_{n-1}\Omega)$ such that for each $\zeta \in \Pi_{n-1}\Omega$, $(\zeta,\thinspace \gamma (\zeta)) \in \Omega$.
  \end{lem}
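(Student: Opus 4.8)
The plan is to build $\gamma$ directly, by patching together trivial local sections with a smooth partition of unity and using convexity of the cross-sections to see that the patched function still lands inside $\Omega$. Boundedness and smoothness of $\Omega$ are not really needed for this; the only property used is that each $z_n$-cross-section is convex.

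First I would set up notation. Write $W=\Pi_{n-1}\Omega$, an open (indeed connected) subset of $\mathbb{C}^{n-1}$, and for $\zeta\in W$ let $S_\zeta=\{\tau\in\mathbb{C}\mid(\zeta,\tau)\in\Omega\}$ be the cross-section over $\zeta$. Since $\Omega$ is open, $S_\zeta$ is open; since $\zeta\in\Pi_{n-1}\Omega$, $S_\zeta$ is nonempty; and since $\Omega$ is convex, $S_\zeta$ is convex.

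Next, the local sections. For each $\zeta_0\in W$ choose $\tau_0$ with $(\zeta_0,\tau_0)\in\Omega$; because $\Omega$ is open there is a neighborhood $U$ of $\zeta_0$ in $W$ with $(\zeta,\tau_0)\in\Omega$ for all $\zeta\in U$. Ranging over all $\zeta_0$ produces an open cover $\{U_i\}_{i\in I}$ of $W$ together with points $\tau_i\in\mathbb{C}$ such that $\tau_i\in S_\zeta$ whenever $\zeta\in U_i$. Pick a $\mathcal{C}^\infty$ partition of unity $\{\varphi_i\}$ subordinate to $\{U_i\}$ (available since $W$ is an open subset of Euclidean space) and define
\[
\gamma(\zeta)=\sum_{i\in I}\varphi_i(\zeta)\,\tau_i .
\]
The sum is locally finite, so $\gamma\in\mathcal{C}^\infty(W)$. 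For a fixed $\zeta$, only the indices with $\zeta\in U_i$ contribute; the weights $\varphi_i(\zeta)$ are nonnegative and sum to $1$; and each corresponding $\tau_i$ lies in $S_\zeta$. Hence $\gamma(\zeta)$ is a convex combination of points of the convex set $S_\zeta$, so $\gamma(\zeta)\in S_\zeta$, that is, $(\zeta,\gamma(\zeta))\in\Omega$, which is exactly what the lemma asks.

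There is no real obstacle in this approach; the one point to be careful about is that the partition-of-unity sum is genuinely a pointwise convex combination (nonnegative weights summing to one), since that is precisely what lets convexity of $S_\zeta$ force $\gamma(\zeta)$ back into $\Omega$. An alternative would be to take $\gamma(\zeta)$ to be the centroid of $S_\zeta$, which also lies in $S_\zeta$ by convexity; but then proving $\gamma$ smooth would require boundary regularity of the cross-sections (for instance that the $\tau$-gradient of a defining function for $\Omega$ does not vanish on $\partial S_\zeta$ for $\zeta$ in the open shadow), and the partition-of-unity construction sidesteps this altogether.
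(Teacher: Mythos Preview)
Your proof is correct and follows essentially the same approach as the paper: build $\gamma$ as a partition-of-unity average of constant local sections, then use convexity to see that the resulting convex combination lands back in $\Omega$. The only minor difference is that the paper uses a specific cover (projections of maximal balls around a dense set of points) and invokes convexity of $\Omega$ itself, whereas you use a generic cover and invoke only convexity of the fibers $S_\zeta$; your observation that fiberwise convexity suffices is a small but genuine sharpening, since it shows the lemma already holds under the weaker hypotheses of Theorem~\ref{convexmain}.
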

  \begin{proof}
  	Let $\Xi=\{\xi^{(\nu)}\}_{\nu=1}^\infty$ be a countable dense set of points of $\Omega$, and by way of notation say $\xi^{(\nu)}=(\xi^{(\nu)\prime},\thinspace \xi^{(\nu)}_n)$ in $(\mathbb{C}^{n-1} \times \mathbb{C})$-coordinates. For each $\nu$, define $B_\nu$ to be the maximal ball centered at $\xi^{(\nu)}$ contained in $\Omega$, $B_\nu=B(\xi^{(\nu)},\thinspace dist(\xi^{(\nu)},bd\Omega))$.  Since $\Xi$ is dense,  the collection $\{B_\nu \}_{\nu=1}^\infty$ is an open cover of $\Omega$.
  	
  	After projecting to the first $n-1$ coordinates, $\Pi_{n-1}B_\nu$ is the lower-dimensional ball $B(\xi^{(\nu)\prime},\thinspace dist(\xi^{(\nu)},bd\Omega))$, the ball in $\mathbb{C}^{n-1}$ centered at $\xi^{(\nu)\prime}$ with radius equal to that of $B_{\nu}$. So define $B_\nu'=\Pi_{n-1}B_\nu$, and we have that the collection $\{B_\nu'\}_{\nu=1}^\infty$ is an open cover of $\Pi_{n-1}\Omega$.
  	
  	Let $\{\varphi_\nu\}_{\nu=1}^\infty$ be a smooth partition of unity of $\Pi_{n-1} \Omega$ which is subordinate to $\{B_\nu'\}_{\nu=1}^\infty$. With this partition of unity, we are prepared to define the function $\gamma$.  For each $\zeta \in \Pi_{n-1}\Omega$, define $\gamma(\zeta)=\sum_{\nu}\varphi_\nu(\zeta)\cdot \xi^{(\nu)}_n$.
  	
  	The function $\gamma$ is smooth because it is locally the finite sum of smooth functions, and so we need only to check that, given $\zeta$, the point $(\zeta,\thinspace\gamma(\zeta))\in \Omega$.  We first notice that, since $\sum_{\nu}\varphi_\nu(\zeta)=1$, we have the equality $\sum_{\nu}\varphi_\nu(\zeta)\cdot \zeta = \zeta$.  We may thus write, in $(\mathbb{C}^{n-1}\times \mathbb{C})$-coordinates,
  	\begin{equation}
  	\label{convexcombo}
  	(\zeta,\thinspace \gamma(\zeta))=\left( \sum_{\nu}\varphi_\nu(\zeta)\cdot \zeta,\space \sum_{\nu}\varphi_\nu(\zeta)\cdot \xi^{(\nu)}_n \right)=\sum_{\nu}\varphi_\nu(\zeta) \cdot(\zeta,\thinspace \xi^{(\nu)}_n).
  	\end{equation}
  	Examining the terms which are non-vanishing, we reason that whenever $\varphi_\nu(\zeta)$ is not zero, we must have $\zeta \in B_\nu'$ by the subordination of the partition of unity. So for such $\nu$ we will have $|\zeta - \xi^{(\nu)\prime}|<dist(\xi^{(\nu)},bd\Omega)$, which implies that, by simply appending $\xi^{(\nu)}_n$ as an $n^{th}$ coordinate, $|(\zeta,\thinspace \xi^{(\nu)}_n)-\xi^{(\nu)}|<dist(\xi^{(\nu)},bd\Omega))$.  And this simply means that $(\zeta,\thinspace \xi^{(\nu)}_n) \in  \Omega$.
  	
  	So indeed (\ref{convexcombo}) expresses $(\zeta, \gamma(\zeta)))$ as a convex combination of points in $\Omega$, and the convexity of $\Omega$ ensures that $(\zeta,\thinspace \gamma(\zeta)) \in \Omega$.
  \end{proof}

  We have established that every smooth bounded convex domain is biholomorphic to a quadrature domain. But a closer look at the biholomorphism used in the proof reveals that in some cases, a biholomorphism  can be chosen which is $\mathcal{C^\infty}$ close to the identity.  If we suppose that $\gamma$ can be chosen holomorphic, then consider the following recipe for a mapping: let the first $n-1$ coordinates remain unchanged, and let the $n^{th}$ component be the integral $U(z',z_n)=\int_{\gamma(z')}^{z_n}g(z', \tau) d \tau$. By the chain rule $U$ is holomorphic, so there is no need to correct the integral with a $C$ to make it so. But $g$ was chosen to be $\mathcal{C}^\infty$ close to $1$. Thus, the $n^{th}$ component of the biholomorphism is $C^\infty$ close to $z_n-\gamma(z')$. Therefore, we modify the new mapping by adding $\gamma$ in the last coordinate, giving the mapping $f(z',z_n)=(z', U(z',z_n)+\gamma(z'))$. If the images of two points $(z',z_n)$ and $(\zeta', \zeta_n)$ are equal under $f$, then $z'=\zeta'$, so $\gamma(z')=\gamma(\zeta')$, and $U(z',z_n)=U(\zeta', \zeta_n)$. Since $U$ is one-to-one in the last variable when the others are held fixed, this shows that also $z_n=\zeta_n$, and $f$ is one-to-one.  Also, since $\gamma$ does not depend on $z_n$, the complex Jacobian determinant of $f$ is $g$, which is in the Bergman span. Thus the image of $f$ is a quadrature domain. The first $n-1$ coordinates remain unchanged by $f$, and we have that the last coordinate is $\mathcal{C}^\infty$ close to $z_n-\gamma(z')+\gamma(z')=z_n$. Thus $f$ is $\mathcal{C}^\infty$ close to the identity. All this proves the following corollary, concerning the possible density of quadrature domains with relation to convex domains.

  \begin{cor}
  Let $\Omega \subset \mathbb{C}^n$ be a smooth bounded convex domain such that a $\gamma$ as in the statement of Theorem \ref{convexmain} may be chosen holomorphic. Then, there exist quadrature domains arbitrarily $\mathcal{C}^\infty$ close to  $\Omega$ which are biholomorphically equivalent to $\Omega$.
  \end{cor}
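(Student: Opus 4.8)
The plan is to rerun the construction of Theorem~\ref{convexmain} while exploiting the assumed holomorphicity of $\gamma$ to bypass the $\bar\partial$-correction step, thereby keeping explicit control of how far the resulting biholomorphism strays from the identity. First I would invoke Condition~R exactly as in Theorem~\ref{convexmain} to pick $g$ in the Bergman span of $\Omega$ with $g$ as $\mathcal{C}^\infty$-close to $1$ on $\overline\Omega$ as we please, close enough in particular that $U(z',z_n)=\int_{\gamma(z')}^{z_n}g(z',\tau)\,d\tau$ is one-to-one in $z_n$ for each fixed $z'\in\Pi_{n-1}\Omega$; this $U$ is well defined because the $z_n$-cross-sections are convex, hence simply connected. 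Because $\gamma$ is holomorphic the correction term of Theorem~\ref{convexmain} is unnecessary: one checks, for instance by differentiating the identity $U(z',\gamma(z'))\equiv 0$ and using $\bar\partial\gamma=0$, that each $\partial U/\partial\bar z_j$ vanishes, so $U$ is already holomorphic on $\Omega$.

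Next I would set $f(z',z_n)=(z',\,U(z',z_n)+\gamma(z'))$. This is holomorphic on $\Omega$, and it is injective: if $f(z',z_n)=f(\zeta',\zeta_n)$ then $z'=\zeta'$, hence $\gamma(z')=\gamma(\zeta')$, hence $U(z',z_n)=U(\zeta',\zeta_n)$, which forces $z_n=\zeta_n$ since $U$ is injective in its last slot. Since the first $n-1$ components of $f$ are coordinate functions and $\gamma$ is independent of $z_n$, the complex Jacobian determinant of $f$ equals $\partial U/\partial z_n=g$, which lies in the Bergman span of $\Omega$; Theorem~\ref{Jacobthm} then shows $f(\Omega)$ is a quadrature domain biholomorphically equivalent to $\Omega$.

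Finally I would estimate the distance from $f$ to the identity. When $g\equiv 1$ the last component of $f$ is $\int_{\gamma(z')}^{z_n}d\tau+\gamma(z')=z_n$; for general $g$ it is $z_n+\int_{\gamma(z')}^{z_n}\bigl(g(z',\tau)-1\bigr)\,d\tau$. Since the path from $\gamma(z')$ to $z_n$ may be taken to depend smoothly on $(z',z_n)$ (the cross-sections being convex), the operator $g\mapsto\int_{\gamma(z')}^{z_n}g(z',\tau)\,d\tau$ is continuous in every $\mathcal{C}^k(\overline\Omega)$ norm, so taking $g$ close enough to $1$ makes $f$ arbitrarily $\mathcal{C}^\infty$-close to the identity on $\overline\Omega$. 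A biholomorphism of $\overline\Omega$ that is $\mathcal{C}^\infty$-close to the identity carries $\Omega$ onto a domain whose defining function---obtained by composing a defining function of $\Omega$ with $f^{-1}$ and invoking the implicit function theorem---is $\mathcal{C}^\infty$-close to that of $\Omega$; this produces the desired quadrature domains arbitrarily $\mathcal{C}^\infty$ near $\Omega$. The step I expect to need the most care is precisely this passage from \emph{$f$ near the identity} to \emph{$f(\Omega)$ near $\Omega$} in the $\mathcal{C}^\infty$ topology on domains, together with checking the $\mathcal{C}^\infty$-continuity of the antiderivative operator with a smoothly varying path; both are standard but not entirely automatic.
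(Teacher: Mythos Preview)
Your proposal is correct and follows essentially the same route as the paper: choose $g$ in the Bergman span close to $1$ via Condition~R, set $U(z',z_n)=\int_{\gamma(z')}^{z_n}g(z',\tau)\,d\tau$, observe that holomorphicity of $\gamma$ makes $U$ already holomorphic so no $\bar\partial$-correction is needed, and then take $f(z',z_n)=(z',U(z',z_n)+\gamma(z'))$, checking injectivity and that the Jacobian determinant is $g$. If anything, you are slightly more careful than the paper about the final passage from ``$f$ is $\mathcal{C}^\infty$-close to the identity'' to ``$f(\Omega)$ is $\mathcal{C}^\infty$-close to $\Omega$,'' which the paper leaves implicit.
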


  The corollary covers, as the most basic example, the case of smooth bounded convex domains which are symmetric about the hyperplane $\{z \in \mathbb{C}^n | z_n=0 \}.$ In this case, $\gamma$ may be taken to be $0$.

\section{Homotopies through Planar Quadrature Domains}

In this final section, we return to the plane to begin down a course of thought which may useful regarding the proposed idea of `quadular domains,' suggested in Bell \cite{Bell5}.  A quadular domain is defined there as a domain, all of whose $2$-dimensional cross-sections through a given point are quadrature domains.  It is the suggestion of \cite{Bell5} that such domains may be dense among many other domains.  This intuition is derived from the density of quadrature domains in the plane.

Of course, one could simply try to take each $2$-dimensional slice through a given point of a domain and transform it into a planar quadrature domain $\mathcal{C}^\infty$ nearby. The flaw here is that, from slice to slice, there is no continuity promised between the resulting quadrature domains; and hence the resulting set is generically not even an open set, just a `spiky' collection of slices arranged in space. To overcome this difficulty, it appears that some kind of homotopy is necessary.  While I do not resolve the issue of quadular domain density, I believe the results of this section could be a good first step in that direction.

Given two bounded simply connected planar quadrature domains, we will see that there exists a continuous deformation of one into the other such that each intermediate shape is a bounded simply connected quadrature domain.  From there, the result is generalized to the case of multiply connected domains in a local sense.  

Before we begin in earnest, we point out that the question of quadrature domain homotopy was taken up already by Sj\"{o}din in \cite{Sjodin}, but his result applies to quadrature domains for holomorphic $L^1$ functions, all of which admit the selfsame quadrature identity.  Here, as always, we work with quadrature domains for $H^2$ functions, and we will allow the quadrature identity to vary along the deformation.

The simply connected result can be obtained in an elementary manner, which we explain presently.  However, this intuitive approach will not suffice for multiply connected domains. Thus, after the initial result for simply connected domains, we will present a modified proof of the simply connected case; this modified version will apply only to simply connected quadrature domains with smooth boundary, but the desirable trade-off will be that the argument will generalize to the multiply connected case.

\begin{prop}
	\label{schomotopy}
Let $\mathbb{D}$ denote the unit disc, and let $\Omega$ be a bounded simply connected quadrature domain in $\mathbb{C}$ containing the origin. If $f$ is a conformal map $\mathbb{D} \rightarrow \Omega$ such that $f(0)=0$, then there is a homotopy $\varphi_t(z)$ with: $\varphi_0(z)=f'(0)z$, $\varphi_1(z)=f(z)$, and each $\varphi_t(\mathbb{D})$ is a quadrature domain.  (Note that this homotopy accomplishes a continuous deformation between $\Omega$ and a disc. By Riemann's mapping theorem, we conclude that any two bounded simply connected quadrature domains can be continuously deformed into one another in such a way that each intermediate shape is again a simply connected quadrature domain.)
	\begin{proof}
		For $(t,z) \in \bar{\mathbb{D}} \times \mathbb{D}$, set $\varphi(t,z)=\varphi_t(z)=\frac{f(tz)}{t}$ if $t \neq 0$ and $\varphi_0(z)=f'(0)z$. Writing the limit definition of $\frac{\partial}{\partial t}(f(z \cdot t))|_{t=0}$ shows that this definition is continuous even up to $t=0$. (An overpowered way to see this would be to note that our $\varphi$ is holomorphic in each variable separately, and so is holomorphic.) In particular, $\varphi$ is continuous on $[0,1] \times \mathbb{D}$. Furthermore, $\varphi_1(z)=f(z)$ for all $z$.

Our proof will be complete after some bookkeeping is accounted for.  First, for each fixed $t$, $0 \leq t \leq 1$, $\varphi_t(z)$ is biholomorphic.  For $t=0$, the formula makes it clear. For $t \neq 0$, suppose $\frac{1}{t}f(tz)=\frac{1}{t}f(tw)$ for $z,w \in \mathbb{D}$. Since $|tz|,|tw| < 1$, the fact that $f$ is one-to-one on $\mathbb{D}$ ensures that $tz=tw$, and $z=w$.

Finally, recall that the Bergman kernel of $\mathbb{D}$ is rational, so all elements of its Bergman span are rational, which means that $f$ is rational, since it is a mapping to a quadrature domain and must be a Bergman coordinate. But a glance at the definition of $\varphi$ thus reveals that for each fixed $t$, $\varphi(t,z)$ is also rational, and hence a Bergman coordinate. That means that $\varphi_t(\mathbb{D})$ is a quadrature domain for each $t$, $0 \leq t \leq 1$.
	\end{proof}
\end{prop}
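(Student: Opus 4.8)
The plan is to deform $\Omega$ to a disc through the dilations of its Riemann map. Concretely, I would set $\varphi(t,z) = \varphi_t(z) = f(tz)/t$ for $t \in (0,1]$ and $\varphi_0(z) = f'(0)z$, so that $\varphi_1 = f$ and the $t = 0$ slice is linear, as demanded. The first task is joint continuity on $[0,1]\times\mathbb{D}$: on $(0,1]\times\mathbb{D}$ this is immediate since $f$ is holomorphic and $1/t$ is continuous there, and at $t = 0$ one recognizes $f(tz)/t$ as the difference quotient $\bigl(f(tz)-f(0)\bigr)/(tz)\cdot z$, which converges, locally uniformly in $z$ on $\mathbb{D}$, to $f'(0)z$; alternatively $\varphi$ is separately holomorphic in $t$ and in $z$, hence holomorphic by Hartogs, and so a fortiori continuous.

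Next I would check that each slice $\varphi_t$ is a biholomorphism onto its image. For $t = 0$ this is the linear map $z\mapsto f'(0)z$, which is nonsingular because $f$ conformal forces $f'(0)\neq 0$. For $t\neq 0$, if $\varphi_t(z)=\varphi_t(w)$ then $f(tz)=f(tw)$, and since $|tz|,|tw|<1$ the injectivity of $f$ on $\mathbb{D}$ yields $tz=tw$, i.e. $z=w$.

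The substance of the argument is that each $\varphi_t(\mathbb{D})$ is a quadrature domain, and here I would lean entirely on Theorem \ref{thm1}. Because $\Omega$ is a bounded simply connected planar quadrature domain, any conformal map onto it --- in particular $f$ --- is a rational function all of whose poles lie outside $\overline{\mathbb{D}}$ (Theorem \ref{thm1} produces one such rational map $R$, and then $f = R\circ\phi$ for a M\"obius automorphism $\phi$ of $\mathbb{D}$, which is itself rational and preserves $\overline{\mathbb{D}}$). For fixed $t\in(0,1]$ the map $z\mapsto f(tz)/t$ is then rational in $z$, with poles only where $|z|>1/t\geq 1$, hence off $\overline{\mathbb{D}}$; combined with injectivity, Theorem \ref{thm1} makes $\varphi_t(\mathbb{D})$ a bounded simply connected quadrature domain. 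For $t = 0$, $\varphi_0(\mathbb{D})$ is a disc of radius $|f'(0)|$, a quadrature domain by the mean value property. The parenthetical claim then follows at once: given two bounded simply connected quadrature domains, use the Riemann mapping theorem to pick a conformal map onto each, deform each to a disc as above, and glue the two deformations through an affine motion carrying one disc onto the other.

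The step most deserving of care is the joint continuity at $t = 0$ --- not because it is deep, but because one must ensure the convergence of the difference quotient is \emph{locally uniform} in $z$, so that $\varphi$ is continuous in the pair $(t,z)$ rather than merely separately continuous; the observation that $\varphi$ is holomorphic in each variable disposes of this cleanly. Everything else is bookkeeping resting on Theorem \ref{thm1}.
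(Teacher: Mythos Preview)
Your proposal is correct and follows essentially the same approach as the paper: both use the Koebe-type dilation $\varphi_t(z)=f(tz)/t$, verify continuity (including the Hartogs shortcut), check injectivity of each slice, and conclude via rationality of $f$. The only cosmetic difference is in the final step---you invoke Theorem~\ref{thm1} directly and track pole locations, whereas the paper phrases the same fact in the language of Bergman coordinates on $\mathbb{D}$.
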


In the course of the proof, the Riemann mapping theorem played the crucial role. This of course will not do for multiply connected domains, and so if we hope to generalize we need a different way of finding a homotopy. We will now once again show that simply connected quadrature domains are deformable to one another through quadrature domains. But this time, the essential component of the proof will be the modification of a `straight-line' homotopy. The idea of working with a straight-line homotopy will carry over to multiply connected domains, but we will require smoothness of the boundary; in the multiply connected case, we will also be restricted to a local result. 

 {\it Remark.} The reader is advised to glance ahead to Lemma \ref{lem5} before proceeding. It is valid for the unit disc (and in that case is essentially Theorem $1$ of \cite{ABG}), and we will be using it in that context. We include Lemma \ref{lem5} there, as opposed to here, in order to give continuity of presentation to the multiply-connected case.

\begin{prop}
Let $\mathbb{D}$ be the unit disc, and let $\Omega \subset \mathbb{C}$ be a bounded simply connected quadrature domain which is the image of a rational biholomorphic function $R:\mathbb{D} \rightarrow \Omega$ with the properties that $R(z)$ extends to be univalent in a neighborhood of $\overline{\mathbb{D}}$, and $R'(0)$ is not a real number.  Then, there exists a homotopy $\varphi_t(z)$, $0 \leq t \leq 1$, between the identity map on $\mathbb{D}$ and $R(z)$ such that each $\varphi_t$ is univalent on $\mathbb{D}$, and the image of $\mathbb{D}$ under each $\varphi_t$ is a quadrature domain. Such homotopy may be chosen to be a continuous rescaling of the straight-line homotopy.
\end{prop}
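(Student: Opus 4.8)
The plan is to build the homotopy explicitly as a rescaled straight-line homotopy. The naive straight-line homotopy $\psi_t(z) = (1-t)z + tR(z)$ interpolates between the identity and $R$, but it has two defects: it need not be univalent for intermediate $t$, and its image need not be a quadrature domain. The second defect is easily repaired — $\psi_t$ is rational for each $t$ (a convex combination of rational maps), so if it is univalent on $\mathbb{D}$, then it is a Bergman coordinate, and by the reasoning in Proposition \ref{schomotopy} its image is a quadrature domain. So the real work is arranging univalence along the whole path while keeping the endpoints fixed.

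First I would invoke Lemma \ref{lem5} (in the form cited for the disc, essentially Theorem 1 of \cite{ABG}) applied to the two univalent maps $\mathrm{id}$ and $R$: since both extend univalently to a neighborhood of $\overline{\mathbb{D}}$, a convex combination $(1-s)\,\mathrm{id} + s\,R$ is univalent on $\mathbb{D}$ provided the combination stays inside an appropriate sector/rotational condition — this is where the hypothesis $R'(0) \notin \mathbb{R}$ enters. The derivatives at the origin of $\mathrm{id}$ and $R$ are $1$ and $R'(0)$, two non-collinear (with respect to the real line) complex numbers, so the segment joining them in parameter space can be replaced by a path that avoids the locus where the ABG-type criterion fails. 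Concretely, I would choose a continuous reparametrization $s = s(t)$, or more precisely a continuous family of coefficients, so that the convex-combination map $\varphi_t = \lambda_0(t)\,\mathrm{id} + \lambda_1(t)\,R$ with $\lambda_0(t)+\lambda_1(t)=1$ traces a path from $(\lambda_0,\lambda_1)=(1,0)$ to $(0,1)$ staying in the region where Lemma \ref{lem5} guarantees univalence. The phrase ``continuous rescaling of the straight-line homotopy'' in the statement signals exactly this: we do not change the two functions being combined, only the (complex) weights, rescaling the straight segment to dodge the bad set.

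The key steps in order: (1) record that $\varphi_t$, being a fixed rational convex combination, is rational for every $t$, hence a Bergman coordinate whenever univalent, hence has quadrature-domain image by Proposition \ref{schomotopy}; (2) verify the endpoint conditions $\varphi_0 = \mathrm{id}$, $\varphi_1 = R$, and continuity of $t \mapsto \varphi_t$, which are immediate from the construction; (3) the crux — use Lemma \ref{lem5} together with $R'(0)\notin\mathbb{R}$ to produce the reparametrized path of weights for which univalence holds uniformly in $t$. I expect step (3) to be the main obstacle: one must check that the univalence criterion of \cite{ABG} is an open condition that is satisfied at both endpoints and, after the rescaling dictated by the non-reality of $R'(0)$, along a connecting arc; the role of the hypothesis on $R'(0)$ is precisely to guarantee that the endpoint $\mathrm{id}$ and the endpoint $R$ lie in the same connected component of the ``good'' parameter region so that such an arc exists. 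The smoothness/univalence of $R$ up to a neighborhood of $\overline{\mathbb{D}}$ is what makes Lemma \ref{lem5} applicable (it needs the boundary behavior), and it is also what will later allow the argument to survive in the multiply connected setting.
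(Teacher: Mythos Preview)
Your proposal misreads the phrase ``continuous rescaling of the straight-line homotopy.'' In the paper it does \emph{not} mean reparametrizing or complexifying the weights $(\lambda_0,\lambda_1)$; it means precomposing the straight-line homotopy with a dilation of the disc. Concretely, set $f_t(z)=(1-t)z+tR(z)$ and then put $\varphi_t(z)=f_t\bigl(k(t)\,z\bigr)$ for a suitable continuous $k:[0,1]\to(0,1]$ with $k(0)=k(1)=1$. Your plan to deform the coefficient path in some two-dimensional parameter space and invoke a connectedness argument for a ``good region'' is not what is happening, and as written it is too vague to succeed: you never identify the bad set, never say why its complement is connected, and Lemma~\ref{lem5} does not by itself furnish such a topological picture.

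The actual mechanism is much more concrete. The hypothesis $R'(0)\notin\mathbb{R}$ is used at exactly one point: since $f_t'(0)=(1-t)+tR'(0)$, one has $f_t'(0)=0$ iff $R'(0)=1-\tfrac{1}{t}$, which is real for $t\in(0,1]$; hence $f_t'(0)\neq 0$ for every $t$, so $f_t$ is locally univalent near $0$ and the quantity
\[
r(t)=\sup\{\rho\le 1:\ f_t\ \text{is univalent on}\ D_\rho\}
\]
is strictly positive for all $t$. Lemma~\ref{lem5} (applied with $f_t$ univalent on a slightly larger disc) shows $r$ is lower semicontinuous, so it attains a positive minimum $m$ on $[0,1]$; Lemma~\ref{lem5} also gives $r(t)=1$ for $t$ near $0$ and near $1$. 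One then manufactures a continuous $k(t)$ with $0<k(t)\le r(t)$ and $k=1$ at the endpoints (e.g.\ piecewise linear, dipping to $m/2$ in the middle). The resulting $\varphi_t(z)=f_t(k(t)z)$ is univalent on $\mathbb{D}$, rational in $z$ for each $t$, hence a Bergman coordinate, and therefore has quadrature-domain image. Your steps (1) and (2) are fine; step (3) needs to be replaced by this radius-of-univalence argument.
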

\begin{proof}
As the statement implies, the method is to modify the straight-line homotopy, so we begin with that homotopy, defined as: $f_t(z)=(1-t)z+tR(z)$, for $z\in \mathbb{D}$, $0 \leq t \leq 1$.  Notice that for $t,s \in [0,1]$, $f'_t(z)-f'_s(z)=(t-s)(R'(z)-1)$. Since $R$ extends to be analytic in a neighborhood of $\overline{\mathbb{D}}$, $R'$ is bounded on $\mathbb{D}$. Thus $|f'_t-f'_s|$ can be made uniformly small on $\mathbb{D}$ by taking $t$ and $s$ sufficiently close.  In particular, since both the identity map $f_0$ and the map $R=f_1$ extend to be univalent in a neighborhood of $\overline{\mathbb{D}}$, we can use Lemma \ref{lem5} below to see that for all $t$ sufficiently small, $f_t$ is univalent on $\mathbb{D}$, and for all $t$ sufficiently near to $1$, $f_t$ is univalent on $\mathbb{D}$.

Now define the real-valued function $r(t)$, $0 \leq t \leq 1$ by the formula:
 \[r(t)=\sup \{\rho \leq 1 \thinspace | \thinspace f_t \thinspace \thinspace \text{is univalent on} \thinspace \thinspace D_{\rho} \},\]
  where $D_{\rho}$ is the disc of radius $\rho$ centered at the origin.  We determine that $r(t)>0$ for each $t \in [0,1]$.  To see this, differentiate $f_t$ and set it equal to $0$, then solve for $R'(0)$ to see that $f'_t(0)=0$ if and only if $R'(0)=1-\frac{1}{t}$.  But since $R'$ is bounded and $R'(0)$ is not real, this is not the case. That means $f_t$ is biholomorphic in a neighborhood of the origin.

Actually, $r(t)$ turns out to be lower-semicontinuous on the interval $[0,1]$. Fix any value $t_0$ in the interval. Given small $\epsilon > 0$, let $\rho_0$ be a positive number such that $0<r(t_0)-\epsilon < \rho_0 < r(t_0)$.  Appealing to the definition of $r(t)$, $f_{t_0}$ is univalent in a neighborhood of $\overline{D_{\rho_0}}$.  Thus by Lemma \ref{lem5}, $f_t$ is univalent on $D_{\rho_0}$ for all $t$ sufficiently close to $t_0$.  Hence $r(t) \geq \rho_0 > r(t_0)-\epsilon$ for all $t$ sufficiently close to $t_0$.  And this is exactly what it means for $r(t)$ to be lower-semicontinuous at $t_0$. Since $t_0$ was arbitrary, $r(t)$ is lower-semicontinuous on all of $[0,1]$.

Since $r(t)$ is lower-semicontinuous on a compact set, it attains a miminum value $m$.  Since $r(t)$ is always greater than $0$, so is $m$.  To review, we have so far determined that $r(t)$ is equal to $1$ for all $t$ near $0$ and for all $t$ near $1$, and $r(t)$ has minimum value $m >0$. Our next step is to construct a continuous function on [0,1] which is equal to $1$ at the endpoints, and always greater than $0$ and at most $r(t)$. This function will provide a scaling which we can insert into our straight-line homotopy to ensure univalence at each value of $t$ in the homotopy.

Let $t_1$ and $t_2$ be such that $0<t_1<t_2<1$, and such that $f_t$ is univalent on $\mathbb{D}$ for all $t \in [0,t_1] \cup [t_2,1].$ We define the continuous function $k(t)$ to be that function which is equal to $1$ on $[0,\frac{t_1}{2}] \cup [\frac{t_2+1}{2},1]$; equal to $m/2$ on $[t_1,t_2]$; equal to $-\frac{2-m}{t_1}(t-\frac{t_1}{2})+1$ on $[\frac{t_1}{2},t_1]$; and equal to $\frac{2-m}{1-t_2}(t-\frac{1+t_2}{2})+1$ on $[t_2,\frac{1+t_2}{2}]$. Thus $k(t)$ has the property that $f_t$ is univalent on $D_{k(t)}$ for all $t \in [0,1]$.  Indeed, by construction $0<k(t) \leq r(t)$ for all $t$.

We are ready to define the homotopy $\varphi_t(z)$. It is a simple matter to see that, since $f_t$ is univalent on $D_{k(t)}$ for all $t \in [0,1]$, it is true that $f_t(k(t) \cdot z)$ is univalent on $\mathbb{D}$ for all $t \in [0,1]$. We define our homotopy to be:
\[\varphi_t(z)=f_t(k(t) \cdot z)=(1-t)k(t) \cdot z+tR(k(t) \cdot z).\]

Continuity in $t$ and $z$ simultaneously is clear, and for each fixed $t$, $\varphi_t$ is a rational function of $z$ which is univalent on $\mathbb{D}$.  Hence $\varphi_t$ is a Bergman coordinate, and the image of $\mathbb{D}$ under $\varphi_t$ is a quadrature domain.
\end{proof}

We note that if $\Omega$ is as in the proposition, except that $R$ has $R'(0)$ real, then the requirement that $R'(0)$ not be real may be affected by multiplying by a number $e^{i \theta}$, $\theta$ real.  This amounts to first continuously rotating $\Omega$ in the plane, and then applying the lemma. Appending the continuous rotation to the modified straight-line homotopy is again a homotopy.

It is the purpose of the following three lemmas to establish the necessary information to generalize to multiply connected domains. We recall that a domain $\Omega \subset \mathbb{C}$ satisfies a chord-arc condition if there exists a positive number $M$ such that, for any two points $z, \zeta \in \Omega$, a path $\gamma$ from $z$ to $\zeta$ contained in $\Omega$ may be found such that the length of the path satisfies $length(\gamma) \le M |z - \zeta|.$ Such an $M$ is called a chord-arc ratio for $\Omega$. The topic of relationships between the chord-arc condition and univalence of holomorphic functions is explored in the article \cite{ABG} (exclusively for simply connected domains); part of the proof of Theorem $1$ from \cite{ABG} inspires the proof of Lemma \ref{lem5}. 

\begin{lem}
	\label{lem3}
	Let $\Omega \subset \mathbb{C}$ be a bounded smooth finitely-connected domain such that each of the boundary curves of $\Omega$ is a circle.  Then $\Omega$ satisfies a chord-arc condition.
	\begin{proof}
		The hypotheses imply that the boundary circles are finitely many and mutually disjoint.  For simplicity, first assume that $bd \Omega$ contains two components, an outer circle and an inner circle.  Let $D=\{z \in \mathbb{C} : \thinspace |z-z_0|<r \}$ be the disc centered at $z_0$ of radius $r$, whose closure is the bounded component of the complement of $\Omega$.
		
		Let $z$ and $w$ be any two points of $\Omega$, and let $L(t)$ be the directed line segment from $z$ to $w$, $L(t)=(1-t)z+tw$, $0 \le t \le 1$.  If the line segment $L$ is contained in $\Omega$, then $length(L)=|z-w|$.  If $L$ is not contained in $\Omega$, then we see that the portion of $L$ not contained in $\Omega$ is either a single point on $bdD$ (in the case that $L$ lies tangent to $D$), or a chord of $\overline{D}$. In either case, since $z$ and $w$ lie in $\Omega$, their distances to $\overline{D}$ are positive, and so we may choose two points $z^*$ and $w^*$ on $L$ such that, if we say $z^*=L(t_0)$ and $w^*=L(t_1)$, we have $t_0 < \inf\{t: \thinspace L(t) \in \overline{D} \}$ and $t_1 > \sup\{t: L(t) \in \overline{D}\}$, and $z^*,w^*$ each have distance exactly $r + \epsilon >0$ to $z_0$, where $\epsilon$ is chosen less than the distance between the boundary curves of $\Omega$, and such that $z \ne z^*$ and $w \ne w^*$. (More informally, enlarge $bdD$ a bit to radius $r+\epsilon$, and find where the enlarged circle intersects $L$; there are two intersection points, $z^*$ and $w^*$.)
		
		Now, let $L_1(t)$ denote $L(t)|_{0 \le t \le t_0}$ and let $L_2(t)$ denote $L(t)|_{t_1 \le t \le 1}$. Let $\gamma(t)$, $t_0 \le t \le t_1$ parameterize the shorter circular arc of radius $r +\epsilon$ about $z_0$, starting from $z^*$ and ending at $w^*$.  The length of this arc is at most $\frac{\pi}{2}|z^*-w^*|$.
		
		Finally, define $\Gamma= L_1 \# \gamma \# L_2$.  Thus $\Gamma$ is the original $L$, except with a circular deformation from $z^*$ to $w^*$.  The curve $\Gamma$ lies entirely within $\Omega$, and its length is
		\begin{multline}
		length(\Gamma)=length(L_1)+length(\gamma) + length(L_2) \\ =  |z-z^*| + length(\gamma) + |w-w^*|   \le \frac{\pi}{2}(|z-z^*| + |z^*-w^*| + |w^*-w|) = \frac{\pi}{2}|z-w|.
		\end{multline}
		
		We have used the fact that $z,z^*,w^*,w$ are collinear.  This establishes the $1$-connected case.
		
		For the general case, repeat the above; perform an analogous circular deformation around the successive components of the complement of $\Omega$ which the line segment from $z$ to $w$ may intersect. The same chord-arc ratio of $\frac{\pi}{2}$ will be found.
		\end{proof}
\end{lem}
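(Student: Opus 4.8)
The plan is to reduce the statement to one elementary estimate: if a line segment is a chord of a circle, then the shorter circular arc joining its endpoints is longer than the chord by a factor of at most $\frac{\pi}{2}$. First I would record the structure forced by the hypotheses. Since $\Omega$ is bounded and connected with boundary a finite, pairwise disjoint union of circles (disjointness being forced by smoothness of $bd\,\Omega$), exactly one boundary circle is the outer one; it bounds a disc $D_0$ with $\Omega \subset D_0$, and the remaining boundary circles bound finitely many pairwise disjoint discs $D_1,\dots,D_k$ with $\overline{D_i} \subset D_0$, so that $\Omega = D_0 \setminus \bigcup_{i=1}^{k}\overline{D_i}$.

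Now fix $z,w \in \Omega$ and let $L(t) = (1-t)z + tw$, $0 \le t \le 1$, be the segment joining them. Convexity of $D_0$ gives $L \subset D_0$; if $L$ meets none of the $\overline{D_i}$ then $L \subset \Omega$ and we are done with chord-arc ratio $1$. Otherwise, for each $i$ with $L \cap \overline{D_i} \neq \emptyset$ I would slightly enlarge $D_i$ to the concentric disc $\widetilde D_i$ of radius $r_i + \epsilon$, where $\epsilon > 0$ is taken small enough to meet three requirements at once: $\epsilon$ is less than half the minimum pairwise distance between the boundary circles; $\epsilon < dist(\overline{D_i}, \mathbb{C}\setminus D_0)$ for each $i$; and $\epsilon < dist(z,\overline{D_i})$ and $\epsilon < dist(w,\overline{D_i})$ for the finitely many relevant $i$ (so $\epsilon$ may depend on $z,w$, which is harmless since the final ratio will not). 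These choices guarantee that the enlarged discs are still pairwise disjoint, that each $bd\,\widetilde D_i \subset \Omega$, and that, because $L$ enters $\overline{D_i}$, it crosses $bd\,\widetilde D_i$ transversally in exactly two points $z_i^* = L(t_i)$, $w_i^* = L(s_i)$ lying strictly between $z$ and $w$. I would then build the path $\Gamma$ by replacing, on each subinterval $[t_i,s_i]$, the chord $L\cap\widetilde D_i$ of $\widetilde D_i$ by the shorter arc of $bd\,\widetilde D_i$ from $z_i^*$ to $w_i^*$, and leaving $L$ untouched elsewhere. Since the intervals $[t_i,s_i]$ are pairwise disjoint, the straight portions of $\Gamma$ lie in $D_0\setminus\bigcup_i\overline{D_i} = \Omega$, and each inserted arc lies on $bd\,\widetilde D_i\subset\Omega$ and outside $\overline{D_i}$; hence $\Gamma$ is a path in $\Omega$ from $z$ to $w$.

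For the length estimate I would use that a chord of a circle of radius $\rho$ subtending central angle $2\theta\in(0,\pi]$ has length $2\rho\sin\theta$, while the minor arc has length $2\rho\theta$, and $\theta/\sin\theta \le \frac{\pi}{2}$ on $(0,\frac{\pi}{2}]$; thus each inserted arc has length at most $\frac{\pi}{2}|z_i^*-w_i^*|$. Because $z$, all the $z_i^*$ and $w_i^*$, and $w$ are collinear on $L$, the straight portions of $\Gamma$ together with the chords $[z_i^*,w_i^*]$ partition $L$, so
\[
length(\Gamma) \;\le\; \frac{\pi}{2}\Bigl((\text{total length of straight portions}) + \sum_i |z_i^*-w_i^*|\Bigr) \;=\; \frac{\pi}{2}\,|z-w|.
\]
This gives the chord-arc constant $M = \frac{\pi}{2}$, uniform over all $z,w \in \Omega$.

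The one genuinely delicate point is the choice of $\epsilon$: one must see that a single (possibly $z,w$-dependent) $\epsilon$ can simultaneously force every detour arc to stay inside $\Omega$ and force the detours to occur on pairwise disjoint open subsegments of $L$ — the latter being exactly what makes the collinearity bookkeeping in the displayed inequality legitimate. Everything else is routine: the reduction to $\Omega = D_0\setminus\bigcup\overline{D_i}$, the transversal intersection of $L$ with $bd\,\widetilde D_i$, the degenerate case where $L$ is tangent to some $\overline{D_i}$ (handled identically, since once $\epsilon$ is fixed $L$ still meets $bd\,\widetilde D_i$ in two points), and the arc-versus-chord inequality. The resulting ratio $\frac{\pi}{2}$ depends on nothing.
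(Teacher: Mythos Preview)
Your argument is correct and is essentially the same as the paper's: replace each subsegment of $L$ that crosses an inner disc by the minor arc of a slightly enlarged concentric circle, use the arc-versus-chord bound $\theta/\sin\theta\le\pi/2$, and sum along $L$ by collinearity to get the uniform constant $\pi/2$. The only cosmetic difference is that the paper first treats the doubly-connected case and then says ``repeat,'' whereas you handle all inner discs at once and are more explicit about the constraints on $\epsilon$.
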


\begin{lem}
	\label{lem4}
If $\Omega \subset \mathbb{C}$ is a bounded $\mathcal{C}^\infty$ smooth finitely-connected domain, then $\Omega$ satisfies a chord-arc condition.
\begin{proof}
	By a result in conformal mapping, planar circle domains are a canonical class for biholomorphisms of multiply connected domains. That means there is a biholomorphism $f$ defined on $\Omega$ such that $f(\Omega)$ is a bounded smooth finitely-connected domain such that all of its boundary curves are circles  (see \cite{Goluzin} for an exposition).  By a well-known theorem of Painlev\'{e}, this $f$ extends $\mathcal{C}^\infty$ smoothly up to the boundary of $\Omega$.
	
	Given two distinct points $z,w \in \Omega$, consider their images $f(z),f(w)$. By the previous lemma, there exists a path $\gamma(t), 0 \le t \le 1$, with $\gamma(0)=f(z)$, $\gamma(1)=f(w)$, and $length(\gamma) \le \frac{\pi}{2}|f(z)-f(w)|.$  Now, since $f$ extends smoothly to the boundary of $\Omega$, it even extends smoothly (though not necessarily holomorphically) to a neighborhood of $\overline{\Omega}$, and thus is Lipschitz on $\Omega$, so that there exists a constant $M$ such that $|f(\zeta)-f(\omega)| \le M|\zeta - \omega|$, valid for all $\zeta, \omega \in \Omega$.
	
	We have so far determined that $length(\gamma) \le \frac{\pi}{2}M|z-w|.$  We pull back $\gamma$ to $\Omega$, defining $\Gamma(t)=F \circ \gamma (t), 0 \le t \le 1$, where $F$ is the inverse of $f$.  We have $\Gamma(0)=z, \Gamma(1)=w$, and the length of $\Gamma$ can be calculated as follows:
	
	\begin{multline} length(\Gamma)= \int_0^1 |(F \circ \gamma)'(t)|dt = \int_0^1 |F'(\gamma(t)) \cdot \gamma '(t)| dt \\ \le C \int_0^1 |\gamma '(t)| dt  = C \cdot length(\gamma) \le CM \frac{\pi}{2}|z-w| ,
	\end{multline}
	where $C$ is a bound of $|F'|$ on $f(\Omega)$ ($F'$ is bounded because $F$ extends smoothly to the boundary of $f(\Omega)$. Thus $\Omega$ exhibits a chord-arc condition.	
\end{proof}	
\end{lem}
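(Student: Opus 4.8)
The plan is to reduce the general finitely-connected case to the circle-domain case already settled in Lemma \ref{lem3}, and then transport the chord-arc property through a uniformizing biholomorphism. First I would invoke the classical Koebe uniformization for multiply connected domains: among bounded finitely-connected domains the circle domains (those whose every boundary component is a round circle) form a canonical class, so there is a biholomorphism $f : \Omega \to \Omega'$ onto a bounded finitely-connected circle domain (see \cite{Goluzin}). Since $\mathrm{bd}\,\Omega$ is $\mathcal{C}^\infty$ and the boundary curves of $\Omega'$ are analytic, a Painlev\'e-type boundary-regularity theorem guarantees that $f$ extends $\mathcal{C}^\infty$-smoothly to $\overline{\Omega}$, and likewise its inverse $F=f^{-1}$ extends smoothly to $\overline{\Omega'}$.

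The second step is to record the two Lipschitz estimates this regularity buys us. Because $f$ extends to a $\mathcal{C}^1$ map on a neighborhood of the compact set $\overline{\Omega}$, its differential is bounded there; splitting pairs of points into those close enough that the connecting segment stays in the neighborhood and those bounded away (where the diameter of $\Omega'$ supplies the bound) yields a constant $M$ with $|f(z)-f(w)| \le M|z-w|$ for all $z,w \in \Omega$. Symmetrically, $F'$ is bounded on $\Omega'$, say $|F'| \le C$.

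The third step transports paths. Given distinct $z,w \in \Omega$, apply Lemma \ref{lem3} to the circle domain $\Omega'$ at the points $f(z), f(w)$ to obtain a path $\gamma \subset \Omega'$ from $f(z)$ to $f(w)$ with $\mathrm{length}(\gamma) \le \tfrac{\pi}{2}|f(z)-f(w)|$. Pulling back, $\Gamma = F\circ\gamma$ is a path in $\Omega$ joining $z$ to $w$, and the change-of-variables estimate gives
\[
\mathrm{length}(\Gamma) = \int_0^1 |F'(\gamma(t))|\,|\gamma'(t)|\,dt \le C\,\mathrm{length}(\gamma) \le \tfrac{\pi}{2}C\,|f(z)-f(w)| \le \tfrac{\pi}{2}CM\,|z-w|.
\]
Thus $\tfrac{\pi}{2}CM$ serves as a chord-arc ratio for $\Omega$, which finishes the argument.

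I expect the main obstacle to be not the estimate but the two classical inputs of the first step: the existence of a circle-domain uniformization, and — more delicately — the smoothness of $f$ up to the boundary. The former is standard though nontrivial; the latter requires that the $\mathcal{C}^\infty$ smoothness of $\mathrm{bd}\,\Omega$ genuinely propagates to the uniformizing map, which is precisely the content of boundary-regularity theory for conformal maps of finitely-connected domains. Once these are cited, the remaining work is routine, with the only care needed being the near/far splitting that upgrades a local derivative bound into the global Lipschitz constant $M$ for $f$.
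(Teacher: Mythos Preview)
Your proposal is correct and follows essentially the same route as the paper's proof: uniformize onto a circle domain via Koebe, invoke Painlev\'e-type boundary regularity for $f$ and $F=f^{-1}$, apply Lemma~\ref{lem3} in the circle domain, and pull the resulting path back while controlling lengths via the Lipschitz bounds on $f$ and $F'$. Your near/far splitting to justify the global Lipschitz constant is a slightly more explicit version of what the paper does (it simply asserts that the smooth extension to a neighborhood of $\overline{\Omega}$ yields a Lipschitz bound), but the argument and the final constant $\tfrac{\pi}{2}CM$ are the same.
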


\begin{lem}
	\label{lem5}
Suppose $\Omega \subset \mathbb{C}$ is a bounded $\mathcal{C}^\infty$ smooth finitely-connected domain, and let $f$ be a holomorphic function which is univalent in a neighborhood of $\overline{\Omega}$.  Then, there exists $\epsilon > 0$ such that, whenever $g$ is a holomorphic function on $\Omega$ and $\sup_{z\in \Omega}{|f'(z)-g'(z)|} < \epsilon$, it follows that $g$ is also univalent on $\Omega$.

\begin{proof}
Let $M>0$ be a chord-arc ratio for $\Omega$ , and let $m>0$ be such that for all $z,w \in \Omega$, $|f(z)-f(w)| \geq m|z-w|$.  That such an $m$ exists is justified in the following way.  Define $F(z,w)$ on $\Omega \times \Omega$ with $F(z,w)=\frac{f(z)-f(w)}{z-w}$ if $z \neq w$ and $F(z,w)=f'(z)$ if $z=w$. Then $F$ is holomorphic in each variable separately, and so is holomorphic on $\Omega \times \Omega$ by Hartogs's Theorem. Since $f$ is univalent on a neighborhood of $\overline{\Omega}$, we know that when $z \neq w$, $f(z)-f(w) \neq 0$. We also know that $f'(z)$ is nonvanishing on $\overline{\Omega}$.  This all goes to show that $F(z,w)$ is continuous and non-vanishing on $\overline{\Omega} \times \overline{\Omega}$. Thus $|F|$ obtains a positive minimum value $m$ on $\overline{\Omega} \times \overline{\Omega}$, and this is the $m$ we meant to find.

Let $\epsilon < \frac{m}{M}$. We shall see that this $\epsilon$ satisfies the Lemma.  So let $g$ be any holomorphic function on $\Omega$ such that $|g'-f'| < \epsilon$ on $\Omega$. To show that $g$ is univalent, we will establish that for $z \neq w$, the value $|g(z)-g(w)|$ cannot possibly be $0$. Given distinct $z,w$ in $\Omega$, let $\gamma$ be a curve in $\Omega$ from $w$ to $z$ such that $length(\gamma) \leq M|z-w|$. Then write:
 \[g(z)-g(w) = \int_\gamma (g'(\zeta)-f'(\zeta))d\zeta + \int_\gamma f'(\zeta)d\zeta = \int_\gamma (g'(\zeta)-f'(\zeta))d\zeta + f(z)-f(w). \]

 Consider now that $|f(z)-f(w)| \geq m|z-w|$, whereas
 \[|\int_\gamma (g'(\zeta) - f'(\zeta))d\zeta| \leq \epsilon \cdot length(\gamma) \leq \epsilon \cdot M \cdot |z-w| < m|z-w| .\] Hence $f(z)-f(w)$ and $\int_\gamma (g'(\zeta)-f'(\zeta))d\zeta$ have no chance of summing to $0$, which in turn means that $g(z)-g(w)$ is non-zero.
\end{proof}	
\end{lem}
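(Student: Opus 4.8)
The plan is to exploit the chord-arc geometry of $\Omega$ together with a uniform lower bound on the difference quotients of $f$. First I would invoke Lemma \ref{lem4}, which guarantees that a $\mathcal{C}^\infty$ smooth bounded finitely-connected $\Omega$ satisfies a chord-arc condition; fix a chord-arc ratio $M > 0$, so that any two points of $\Omega$ can be joined inside $\Omega$ by a path of length at most $M$ times their Euclidean distance. The idea is that for $g$ whose derivative is uniformly close to $f'$, the function $g$ differs from $f$ along such a short path by only a small amount, too small to undo the injectivity of $f$.

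The key preliminary step, and the one I expect to require the most care, is producing a constant $m > 0$ with $|f(z) - f(w)| \geq m|z - w|$ for all $z, w \in \Omega$. To get this I would introduce $F(z,w)$ on $\overline{\Omega} \times \overline{\Omega}$ by $F(z,w) = \frac{f(z)-f(w)}{z-w}$ for $z \neq w$ and $F(z,w) = f'(z)$ on the diagonal. Because $f$ is holomorphic on a neighborhood of $\overline{\Omega}$, for each fixed variable the singularity along the diagonal is removable, so $F$ is separately holomorphic and hence, by Hartogs's theorem, jointly holomorphic (in particular continuous) on an open set containing $\overline{\Omega} \times \overline{\Omega}$. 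The hypothesis that $f$ is \emph{univalent on a neighborhood of} $\overline{\Omega}$ is exactly what is needed to ensure $F$ is nonvanishing off the diagonal, and univalence also forces $f'$ to be nonvanishing on $\overline{\Omega}$, covering the diagonal. Thus $|F|$ is continuous and strictly positive on the compact set $\overline{\Omega} \times \overline{\Omega}$, so it attains a positive minimum $m$. (It is important here that we work over the closure, not just $\Omega$, which is why the hypothesis is phrased with a neighborhood.)

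With $M$ and $m$ in hand, set $\epsilon < m/M$ and let $g$ be any holomorphic function on $\Omega$ with $\sup_{z \in \Omega}|g'(z) - f'(z)| < \epsilon$. Given distinct $z, w \in \Omega$, choose a path $\gamma$ in $\Omega$ from $w$ to $z$ with $\mathrm{length}(\gamma) \leq M|z-w|$, and write
\[
g(z) - g(w) = \int_\gamma \bigl(g'(\zeta) - f'(\zeta)\bigr)\,d\zeta + \bigl(f(z) - f(w)\bigr).
\]
The first term is bounded in modulus by $\epsilon \cdot \mathrm{length}(\gamma) \leq \epsilon M |z-w| < m|z-w|$, while the second term has modulus at least $m|z-w|$. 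Hence the two summands cannot cancel, so $g(z) \neq g(w)$, and $g$ is univalent on $\Omega$. The main obstacle, as noted, is the Hartogs-plus-compactness argument for the lower bound $m$; once that is established the chord-arc estimate finishes the proof in one line.
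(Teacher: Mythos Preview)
Your proposal is correct and matches the paper's proof essentially step for step: both invoke the chord-arc ratio $M$ from Lemma~\ref{lem4}, both obtain the lower bound $m$ on difference quotients via the function $F(z,w)$ extended to the diagonal by $f'$ and shown continuous and nonvanishing on $\overline{\Omega}\times\overline{\Omega}$ using Hartogs's theorem, and both finish by writing $g(z)-g(w)=\int_\gamma(g'-f')\,d\zeta+(f(z)-f(w))$ along a chord-arc path and comparing the two terms against $m|z-w|$ with $\epsilon<m/M$.
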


Finally we are in a position to string these lemmas into a theorem concerning deformation of multiply-connected quadrature domains.

\begin{thm}
Let $\Omega \subset \mathbb{C}$ be a $\mathcal{C}^\infty$ smooth bounded finitely-connected domain, and let $f$ be a function which is univalent on a neighborhood of $\overline{\Omega}$ and such that $f(\Omega)$ is a quadrature domain.  Then there exists $\epsilon >0$ such that whenever $g$ is a holomorphic function on $\Omega$ such that $\sup_\Omega|g'-f'| < \epsilon$ and $g(\Omega)$ is a quadrature domain, the following holds: if $\varphi_t(z)=(1-t)f+tg$, $0 \leq t \leq 1$ is the straight-line homotopy from $f$ to $g$, then each $\varphi_t$ is biholomorphic on $\Omega$, and each $\varphi_t(\Omega)$ is a quadrature domain.
\begin{proof}
Choose $\epsilon$ satisfying Lemma \ref{lem5}, and let $g$ be as in the statement of the theorem according to this $\epsilon$.  Then for each $t$ in $[0,1]$ and $z \in \Omega$, we will have $|\varphi_t'(z)-f'(z)|=|(1-t)f'(z)+tg'(z)-f'(z)|=t|f'(z)-g'(z)| < \epsilon.$ By Lemma \ref{lem5}, then, $\varphi_t$ is univalent on $\Omega$.  Since $f$ and $g$ both map $\Omega$ into quadrature domains, it follows that $f'$ and $g'$ are members of the Bergman span of $\Omega$. Since the Bergman span is a linear space and each $\varphi_t'$ is a linear combination of $f'$ and $g'$, it follows that each $\varphi_t'$ is in the Bergman span of $\Omega$.  Hence, $\varphi_t(\Omega)$ is a quadrature domain for each $t$, and the theorem is proved.
\end{proof}
\end{thm}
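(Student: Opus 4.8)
The plan is to reduce the statement to Lemma~\ref{lem5} and Theorem~\ref{Jacobthm}, so that no new ideas are needed beyond the three chord-arc lemmas already in hand. First I would let $\epsilon>0$ be the number furnished by Lemma~\ref{lem5} for the domain $\Omega$ and the function $f$; this is legitimate because $\Omega$ is $\mathcal{C}^\infty$ smooth, bounded, and finitely connected, and $f$ is univalent on a neighborhood of $\overline{\Omega}$, which are exactly the hypotheses of that lemma. The conclusion I keep is: every holomorphic $h$ on $\Omega$ with $\sup_\Omega|h'-f'|<\epsilon$ is univalent on $\Omega$.

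Next, fix $g$ as in the statement (relative to this $\epsilon$) and fix $t\in[0,1]$. Differentiating the straight-line interpolant gives $\varphi_t'(z)=(1-t)f'(z)+tg'(z)$, hence $\varphi_t'-f'=t(g'-f')$ and $\sup_\Omega|\varphi_t'-f'|=t\sup_\Omega|g'-f'|\le\sup_\Omega|g'-f'|<\epsilon$. Applying Lemma~\ref{lem5} with $h=\varphi_t$ shows $\varphi_t$ is univalent on $\Omega$, hence biholomorphic onto $\varphi_t(\Omega)$.

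To identify each image as a quadrature domain I would use Theorem~\ref{Jacobthm} in the one-variable case, where the complex Jacobian determinant of a map is simply its derivative. Since $f(\Omega)$ and $g(\Omega)$ are quadrature domains, $f'$ and $g'$ lie in the Bergman span of $\Omega$; the Bergman span is a complex-linear subspace, so the linear combination $\varphi_t'=(1-t)f'+tg'$ is again in it, and Theorem~\ref{Jacobthm} (forward direction) gives that $\varphi_t(\Omega)$ is a quadrature domain for every $t\in[0,1]$. Continuity of $(t,z)\mapsto\varphi_t(z)$ is immediate from the defining formula, so $\varphi_t$ really is a homotopy from $f$ to $g$.

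I do not expect a genuine obstacle at this stage: the work has been front-loaded into Lemmas~\ref{lem3}--\ref{lem5}, whose point is precisely to make the stability-of-univalence statement of Lemma~\ref{lem5} available on multiply-connected $\mathcal{C}^\infty$ domains by way of a chord-arc estimate. Once that is in place, the present theorem is just the observation that the straight-line homotopy perturbs the derivative by at most $\sup_\Omega|g'-f'|$ in sup norm while staying inside the (linear) Bergman span.
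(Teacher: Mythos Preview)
Your proposal is correct and follows essentially the same approach as the paper's proof: choose $\epsilon$ from Lemma~\ref{lem5}, observe that $\varphi_t'-f'=t(g'-f')$ has sup norm below $\epsilon$ so each $\varphi_t$ is univalent, and then use linearity of the Bergman span together with Theorem~\ref{Jacobthm} to conclude each image is a quadrature domain. The only differences are cosmetic---you make the appeal to Theorem~\ref{Jacobthm} explicit and add a remark on continuity, both of which the paper leaves implicit.
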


The following corollary to the theorem is the promised result spelling out circumstances under which a quadrature domain can be continuously deformed into a nearby quadrature domain.

\begin{cor}
If $\Omega$ is a bounded $\mathcal{C}^\infty$ smooth finitely-connected quadrature domain, let $\epsilon$ satisfy the hypotheses of the theorem using the identity map $f(z)=z$.  Then, if $V$ is a quadrature domain which is conformally equivalent to $\Omega$ under a biholomorphic map $g: \Omega \rightarrow V$ which is within $\epsilon$ of the identity in the $\mathcal{C}^1(\Omega)$ norm, then $\Omega$ may be continuously deformed into $V$ in such a way that each intermediate domain is a quadrature domain and conformally equivalent to $\Omega$.

\begin{proof} It is easy to see that $f(z)=z$ is satisfactory for the implementation of the theorem, and then of course $\varphi_0(\Omega)=\Omega$, $\varphi_1(\Omega)=V$, and each $\varphi_t(\Omega)$ is a quadrature domain.
\end{proof}
\end{cor}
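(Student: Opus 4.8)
The plan is to obtain this as an immediate application of the preceding theorem, taking the ``model map'' $f$ there to be the identity. First I would check that $f(z)=z$ satisfies the two standing hypotheses the theorem imposes on $f$: it is univalent on a neighborhood of $\overline{\Omega}$ (trivially, being univalent on all of $\mathbb{C}$), and $f(\Omega)=\Omega$ is a quadrature domain by assumption. The theorem then yields a number $\epsilon>0$, and this is exactly the $\epsilon$ named in the corollary.

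Next I would translate the hypothesis on $g$ into the form the theorem expects. Saying that $g$ is within $\epsilon$ of the identity in the $\mathcal{C}^1(\Omega)$ norm entails in particular that $\sup_{z\in\Omega}|g'(z)-1|=\sup_{z\in\Omega}|g'(z)-f'(z)|<\epsilon$, which is precisely the smallness condition the theorem requires of the perturbation; and $g(\Omega)=V$ is assumed to be a quadrature domain. Hence the theorem applies verbatim to the pair $(f,g)$: the straight-line homotopy $\varphi_t(z)=(1-t)f(z)+tg(z)$, $0\le t\le 1$, consists of maps each of which is biholomorphic on $\Omega$, and each $\varphi_t(\Omega)$ is a quadrature domain.

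It then remains only to read off the endpoints and the conformal-equivalence assertion. Since $\varphi_0=f=\mathrm{id}$ we have $\varphi_0(\Omega)=\Omega$, and since $\varphi_1=g$ we have $\varphi_1(\Omega)=V$, so $t\mapsto\varphi_t(\Omega)$ is the desired continuous deformation of $\Omega$ into $V$ (continuity in $(t,z)$ being clear from the affine-in-$t$ formula). Each intermediate domain $\varphi_t(\Omega)$ is conformally equivalent to $\Omega$ simply because $\varphi_t$ is itself a biholomorphism defined on $\Omega$. I do not expect any genuine obstacle here: the proof is pure bookkeeping, and the only point deserving a moment's care is verifying that the identity map is an admissible choice for the theorem's model map and that ``$\mathcal{C}^1$-close to the identity'' supplies exactly the derivative estimate that the theorem, via Lemma \ref{lem5}, needs.
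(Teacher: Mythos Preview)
Your proposal is correct and follows exactly the paper's approach: apply the preceding theorem with $f(z)=z$, note that the $\mathcal{C}^1$ hypothesis gives the needed derivative bound, and read off the conclusions. You have simply spelled out in full the verifications that the paper compresses into ``It is easy to see that $f(z)=z$ is satisfactory for the implementation of the theorem.''
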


These results, aside from their possible utility in justifying the notion of quadular domain, are a further testament to the profusion of quadrature domains to be found in the plane. More than being dense among smooth domains, we can now appreciate that quadrature domains can even be continuously deformed into one another through other quadrature domains.

\bibliography{all}
\bibliographystyle{plain}

\end{document}